\newtheorem{theorem}{Theorem}[section]
\newtheorem{lemma}[theorem]{Lemma}
\newtheorem{problem}[theorem]{Problem}
\newtheorem{assumption}[theorem]{Assumption}
\newtheorem{corollary}[theorem]{Corollary} 
\theoremstyle{definition}
\newtheorem{definition}[theorem]{Definition}
\theoremstyle{remark}
\newtheorem{remark}[theorem]{Remark}
\begin{document}
	\title{Uncomputability of Global Optima for Nonconvex Functions in the Oracle Model}
	\author{K. Lakshmanan \\ Department of Computer Science and Engineering, \\ Indian Institute of Technology (BHU) Varanasi, 221005, India, \\ Email: lakshmanank.cse@iitbhu.ac.in}
	\date{}

	\maketitle
	
	\begin{abstract}
		While it is well known that finding approximate optima of non-convex functions is computationally intractable, we show that the problem is, in fact, uncomputable in the oracle model. Specifically, we prove that no algorithm with access only to a function oracle can compute the global minimum or even an $\epsilon$-approximation of the minimizer or minimal value. We then characterize a necessary and sufficient condition under which global optima become computable, based on the existence of a computable predicate that subsumes the global optimality condition. As an illustrative example, we consider the basin of attraction around a global minimizer as such a property and propose a simple algorithm that converges to the global minimum when a bound on the basin is known. Finally, we provide numerical experiments on standard benchmark functions to demonstrate the algorithm’s practical performance. \\

		\textbf{Keywords:}  Non-convex functions, Oracle model,
		Computability, Global minimum, Turing machine, Basin of attraction, Gradient descent
		
		\textbf{Mathematics Subject Classification 2020: 68Q17, 90C26}
	\end{abstract}

	\section{Introduction and Preliminaries}
	
	The problem of computing the global minimum of a non-convex continuous function \( f : C \to \mathbb{R} \), where \( C \subset \mathbb{R}^d \) is a closed, compact subset, is central to optimization theory \cite{globook}. A global minimum is a point \( x^* \in C \) such that \( f(x^*) \leq f(x) \) for all \( x \in C \). By the extreme value theorem, this minimum is always attained for continuous \( f \) on compact \( C \), and our goal is to identify at least one such point.
	
	In this paper, we analyze this problem in the \emph{oracle model}, where function values are accessible only via queries to a black-box oracle. This model differs from the computable real-function setting studied in computable analysis (e.g., \cite{pour}), and is more general than the restricted oracle models considered in works such as \cite{optmzer}, where the optimal \emph{value} may be computable, but the \emph{optimizer} is not. In contrast, we show that \emph{neither} the optimizer nor the optimal value is computable in the oracle model we consider.
	
	It is known that global minimization of non-convex functions is NP-hard, even for certain continuous functions, via reductions from problems such as subset-sum \cite{nph}. Let us define the \(\epsilon\)-optima set as \( S = \{x \in C \mid |f(x) - f(x^*)| \leq \epsilon \} \). In optimization literature \cite{foster2019complexity,zhang2020complexity}, even approximating the global minimum or reaching \(\epsilon\)-stationary points is known to be intractable. In this paper, we go further and show that the set \( S \) is in fact \emph{non-computable}. This is a strictly stronger result than intractability — it implies no algorithm, regardless of resources, can compute an \(\epsilon\)-optimal solution in the oracle setting.
	
	While it is easy to see that a simple grid search produces a sequence converging to the global minimum, such a method requires an exponential number of oracle calls for Lipschitz continuous functions, even when the Lipschitz constant is known \cite{nest}. We show that having access to a Lipschitz constant (or any property bounding function variation) is essential for computability — and without such knowledge, global optimization becomes non-computable.
	
	\paragraph{Our main contributions are as follows:}
	\begin{enumerate}
		\item We formalize an oracle-based model of optimization that applies to a broad range of real-world non-convex problems and generalizes existing oracle models.
		\item We prove that, in this oracle setting, both the minimal value and any \(\epsilon\)-minimizer of a non-convex continuous function are non-computable.
		\item We identify a natural property that a function must satisfy if its global optima are to be computable or even approximable.
		\item We introduce the notion of a “basin of attraction” as a computable property, and show that if this is known, a convergent algorithm to the global optimum is possible.
	\end{enumerate}

	While the nonconvex optimization problem has long been known to be intractable, especially under worst-case complexity assumptions, the boundary between intractability and uncomputability remains underexplored. Our work aims to make this boundary precise by examining the problem through the lens of oracle-based computation. The results not only illuminate fundamental limits but also justify why certain classes of heuristics (such as gradient-based methods) are inherently limited.

	\section{Turing Machines, Computable Numbers, and Finite Precision Representation}
	
	We begin by recalling the standard definition of a Turing machine.
	
	\begin{definition}
		A Turing machine consists of three infinite tapes divided into cells, a reading head which scans one cell of each tape at a time, and a finite set of internal states \( Q = \{q_0, q_1, \ldots, q_n\} \), with \( n \geq 1 \). Each cell contains either a blank symbol or the symbol 1. In a single step, the machine may simultaneously:
		\begin{enumerate}
			\item change its internal state;
			\item overwrite the scanned symbol \( s \) with a new symbol \( s' \in S = \{1, B\} \);
			\item move the reading head one cell to the left (L) or right (R).
		\end{enumerate}
		The operation is governed by a partial function \( \Gamma : Q \times S^3 \rightarrow Q \times (S \times \{L, R\})^3 \).
	\end{definition}
	
	\begin{remark}
		The map \( \Gamma \), viewed as a finite set of quintuples, is referred to as a Turing program. For a tuple \( (q, s_1, s_2, s_3, q', s_1', X_1, s_2', X_2, s_3', X_3) \in \Gamma \), the interpretation is: when in state \( q \), reading symbols \( s_1, s_2, s_3 \), the machine transitions to state \( q' \), writes \( s_i' \) on tape \( i \), and moves the head in direction \( X_i \) (L or R) on tape \( i \), for \( i = 1,2,3 \).
	\end{remark}
	
	\subsection{Computable Numbers}
	
	\begin{definition}
		A function \( f : \mathbb{N} \to \mathbb{N} \) is said to be \emph{computable} if there exists a Turing machine that, on input \( x \in \mathbb{N} \), writes \( f(x) \) on its output tape.
	\end{definition}
	
	All rational numbers are assumed to be computable. The standard definition of a computable real number (e.g., \cite{optmzer}) is as follows:
	
	\begin{definition}
		A sequence of rational numbers \( (r_n)_{n \in \mathbb{N}} \) is called \emph{computable} if there exist recursive functions \( s, p, q : \mathbb{N} \rightarrow \mathbb{N} \) such that
		\[
		r_n = (-1)^{s(n)} \cdot \frac{p(n)}{q(n)}
		\]
		for all \( n \in \mathbb{N} \).
	\end{definition}
	
	\begin{definition}
		A sequence \( (x_n) \subset \mathbb{R} \) is said to \emph{converge effectively} to a limit \( x \in \mathbb{R} \) if
		\[
		|x_n - x| \leq 2^{-n}
		\]
		for all \( n \in \mathbb{N} \).
	\end{definition}
	
	\begin{definition}
		A real number \( x \in \mathbb{R} \) is called \emph{computable} if there exists a computable rational sequence \( (r_n) \) that converges effectively to \( x \). The sequence \( (r_n) \) is then said to be a \emph{representation} of \( x \).
	\end{definition}
	
	The set of computable numbers is closed under basic arithmetic operations and effective convergence.
	
	\subsection{Finite Precision Representation}
	
	To formalize computability in practice, we consider finite-precision representations. This models real-world computation and is not a restrictive assumption.
	
	Let \( x \in \mathbb{R} \). Define its decimal expansion of precision length \( k \) as follows: 
	- Let \( r_0 \) be the largest integer such that \( r_0 \leq x \).
	- Recursively, for \( i = 1 \) to \( k \), let \( r_i \) be the largest digit such that
	\[
	r_0 + \frac{r_1}{10} + \cdots + \frac{r_i}{10^i} \leq x.
	\]
	
	This gives the finite-precision decimal expansion \( r_0.r_1r_2\ldots r_k \) of \( x \). The sequence \( (r_0, r_1, \ldots, r_k) \) is the precision-\( k \) representation of \( x \). Note that this expansion is unique and may include zeros.
	
	For a point \( x = (x_1, x_2, \ldots, x_d) \in \mathbb{R}^d \), the finite-precision representation is obtained by applying the above procedure to each coordinate. While actual Turing machines use binary representations, we assume decimal precision for simplicity, noting that this does not affect computability theory: a number is computable if and only if its finite-precision representation can be generated to arbitrary precision by a Turing machine.
	
	\begin{remark}\label{gaprem}
		Let \( r_0, \ldots, r_k \) be the precision-\( k \) representation of a real number \( x \). Let \( \underline{x} \) be the number obtained by extending this expansion with zeros (i.e., \( r_{i > k} = 0 \)), and \( \bar{x} \) the number obtained by extending with nines (i.e., \( r_{i > k} = 9 \)). Then the interval \( [\underline{x}, \bar{x}] \) contains all real numbers that share the same decimal prefix. The length of this interval is \( \epsilon = 10^{-k} \), and thus with precision length \( k \), we can distinguish numbers only at a resolution of at least \( \epsilon \).
	\end{remark}

	\subsection{The Oracle Model}
	
	We now describe the oracle model under which we analyze the computability of global optima.
	
	In the oracle model, the function \( f : C \to \mathbb{R} \), defined over a compact domain \( C \subset \mathbb{R}^d \), is not given explicitly. Instead, access to \( f \) is provided via an \emph{oracle}, which, given a query point \( x \in C \), returns the value \( f(x) \) up to a specified finite precision.
	
	Formally, we assume that the algorithm is implemented as a Turing machine that can query an oracle \( \mathcal{O}_f \), which returns a finite-precision approximation of \( f(x) \). That is, for a query \( x \in C \) and a requested precision \( \epsilon > 0 \), the oracle returns a rational number \( r \) such that
	\[
	|r - f(x)| < \epsilon.
	\]
	We assume that queries to the oracle include both the point \( x \in C \) (given in finite-precision form) and the desired output precision \( \epsilon = 10^{-k} \), for some integer \( k \geq 1 \). The oracle may internally compute \( f(x) \) using any method, but this process is opaque to the querying algorithm.
	
	\begin{definition}
		An algorithm is said to \emph{compute} the global minimum \( f(x^*) \) of a function \( f \) in the oracle model if, for any \( \epsilon > 0 \), it halts in finite time and outputs a point \( \hat{x} \in C \) such that
		\[
		|f(\hat{x}) - f(x^*)| < \epsilon.
		\]
	\end{definition}
	
	\begin{remark}
		Note that this model captures many real-world optimization scenarios, where the function \( f \) is treated as a black box — as in hyperparameter tuning, physical simulation, or learned models — and only finite-precision function evaluations are feasible.
	\end{remark}
	
	We emphasize that this model differs from classical computable analysis, where \( f \) itself is assumed to be a computable function. In the oracle model, the function may be adversarially defined with respect to the queries made by the algorithm, and only pointwise access is allowed.
	
	
	\subsection{Problem Definition}
	
	We consider the problem of approximating global optima of continuous functions using a Turing machine with access to a function oracle.
	
	The oracle provides finite-precision evaluations of a continuous function \( f : C \to \mathbb{R} \), where \( C \subset \mathbb{R}^d \) is a compact domain. Specifically, for any finite-precision input \( x \in C \) and any requested output precision \( \epsilon > 0 \), the oracle returns a rational approximation \( r \) such that
	\[
	|r - f(x)| < \epsilon.
	\]
	The Turing machine is equipped with three tapes: one for computation, one for querying the oracle (including the input point and required precision), and one for reading and storing the oracle's responses \cite{sorbook}. The third tape may also store the history of queried points \( x_0, x_1, \ldots, x_k \) and the corresponding function values \( f(x_0), f(x_1), \ldots, f(x_k) \), which can be used to generate further queries.
	
	The objective is to compute an approximation to the global minimum value of \( f \), or a point achieving such a value. Let \( x^* \in C \) be a global minimizer, i.e., \( f(x^*) \leq f(x) \) for all \( x \in C \). The goal is to output a point \( x_o \in C \), of specified finite precision, such that
	\[
	|f(x_o) - f(x^*)| < \epsilon.
	\]
	We show that, in general, this problem is \emph{not computable} in the oracle model.
	
	We now formally define the problems considered in this paper:
	
	\begin{problem}[Global Minimum Value]
		Given a continuous, non-convex function \( f \), is there a Turing machine with access to the function oracle that can compute the global minimum value \( f(x^*) \)?
	\end{problem}
	
	\begin{problem}[Global Minimizer]
		Given a continuous, non-convex function \( f \), is there a Turing machine with access to the function oracle that can compute the global minimizer \( x^* \)?
	\end{problem}
	
	\begin{problem}[\( \epsilon \)-Approximation]
		Given a continuous, non-convex function \( f \) and an input precision \( \epsilon > 0 \), is there a Turing machine with access to the function oracle that can compute a point \( x_o \in C \) such that \( |f(x_o) - f(x^*)| < \epsilon \)?
	\end{problem}
	
	In the subsequent sections, we answer all of the above questions in the negative.
	
	\section{Main Theorem}
	
	The following results offer a formal foundation for the empirical difficulty encountered in global optimization tasks. We show that, even in the absence of algorithmic constraints such as runtime, the problem is inherently undecidable in the oracle model.
	
	Let \( f : C \to \mathbb{R} \) be a continuous function on a compact domain \( C \subset \mathbb{R}^d \). Let the set of global minimizers be denoted by \( G^f \). We begin by establishing that for any desired precision, there exists a finite-precision approximation to the global minimum.
	
	\begin{lemma}\label{preclem}
		For all \( \epsilon > 0 \), there exists a finite-precision point \( x^*_\epsilon \) such that
		\[
		|f(x^*_\epsilon) - f(x^*)| < \epsilon,
		\]
		where \( x^* \in G^f \) is a global minimizer.
	\end{lemma}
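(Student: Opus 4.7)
The statement is essentially a density/continuity fact, so my plan is to combine the continuity of $f$ at $x^*$ with the fact that finite-precision points form an arbitrarily fine grid in $\mathbb{R}^d$. First, since $f$ is continuous on $C$ and $x^* \in C$, for the given $\epsilon > 0$ there exists $\delta > 0$ such that whenever $y \in C$ satisfies $\|y - x^*\| < \delta$, we have $|f(y) - f(x^*)| < \epsilon$. This reduces the problem to producing a finite-precision point $x^*_\epsilon \in C$ within $\delta$ of $x^*$.

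Next, I would appeal to the finite-precision framework of Section 2.2. For any integer $k \geq 1$, the set of points in $\mathbb{R}^d$ whose coordinates admit a precision-$k$ decimal representation forms a grid of mesh size $10^{-k}$, and by Remark \ref{gaprem} each coordinate of any real point lies within $10^{-k}$ of a corresponding grid value. Hence every $y \in \mathbb{R}^d$ is within Euclidean distance $\sqrt{d}\cdot 10^{-k}$ of some grid point. Choosing $k$ large enough that $\sqrt{d}\cdot 10^{-k} < \delta$, the precision-$k$ truncation of $x^*$ (obtained coordinatewise) is a finite-precision point at distance less than $\delta$ from $x^*$. Calling this point $x^*_\epsilon$ and invoking the continuity bound completes the argument.

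The one subtlety, and what I expect to be the main obstacle, is ensuring that $x^*_\epsilon$ actually lies in the domain $C$ so that $f(x^*_\epsilon)$ is defined. If $x^*$ lies in the interior of $C$, this is automatic once $k$ is chosen large enough. If $x^*$ sits on the boundary of $C$, the coordinatewise truncation could fall just outside $C$, and I would address this by replacing naive truncation with a choice of the nearest precision-$k$ grid point lying in $C$. Such a point exists for all sufficiently large $k$: because $C$ is compact and $f$ is continuous, one can first move slightly into $C$ from $x^*$ without changing $f$ by more than $\epsilon/2$, and then snap to the nearest grid point, losing at most another $\epsilon/2$. Splitting $\epsilon$ into two halves this way absorbs the boundary issue, and the rest of the argument goes through unchanged.
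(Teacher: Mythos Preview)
Your proposal is correct and follows essentially the same approach as the paper: use continuity of $f$ at $x^*$ to obtain a $\delta$, then take the precision-$k$ truncation of $x^*$ with $k$ large enough that the grid spacing is below $\delta$. You are in fact more careful than the paper, which simply invokes Remark~\ref{gaprem} and does not discuss the boundary issue at all.
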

	
	\begin{proof}
		By continuity of \( f \), for any \( \epsilon > 0 \), there exists \( \delta > 0 \) such that \( |x - y| < \delta \Rightarrow |f(x) - f(y)| < \epsilon \). Let \( x^* \) be a global minimizer of \( f \), and let \( x^*_\epsilon \) be its finite-precision representation at precision length \( n \), with point spacing less than \( \delta \) (see Remark~\ref{gaprem}). Then \( |x^*_\epsilon - x^*| < \delta \Rightarrow |f(x^*_\epsilon) - f(x^*)| < \epsilon \).
	\end{proof}
	
	\begin{definition}
		Let \( G^f_{\epsilon,k} \) denote the set of all finite-precision points of length \( k \) whose function value is within \( \epsilon \) of the global minimum. Let \( G^f_\epsilon = \bigcup_{k} G^f_{\epsilon,k} \).
	\end{definition}
	
	Since there are finitely many points of given finite precision, each set \( G^f_{\epsilon,k} \) is finite. To simplify our analysis, we assume the global minimizer is unique, i.e., \( G^f = \{x^*\} \). This assumption is common in optimization theory, particularly for strictly convex functions.
	
	We now present a sequence of supporting lemmas:
	
	\begin{lemma}\label{checkhalt}
		The problem of deciding whether a real-valued function \( g \) is identically zero is undecidable.
	\end{lemma}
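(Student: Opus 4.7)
The plan is an adversary argument in the oracle model. I would assume, for contradiction, that there is a Turing machine $A$ which, given oracle access to any continuous $g : C \to \mathbb{R}$, halts in finite time and correctly outputs whether $g \equiv 0$. The core idea is that $A$ can only inspect $g$ at finitely many query points before it halts, and a single small bump hidden away from those points yields a continuous nonzero function whose oracle is indistinguishable from the zero oracle along the executed trace.

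Concretely, I would first run $A$ on the oracle $\mathcal{O}_{g_0}$ for $g_0 \equiv 0$. By assumption it terminates, issuing some finite sequence of queries at finite-precision points $x_1,\ldots,x_m \in C$ with requested precisions $\epsilon_1,\ldots,\epsilon_m$; every response $0$ is within precision, and correctness forces $A$ to output ``yes.'' I would then construct $g_1 \not\equiv 0$ that mimics $g_0$ on this trace: since $\{x_1,\ldots,x_m\}$ is finite and $C$ has uncountably many points, I pick $y \in C$ with $y \neq x_i$ for every $i$, set $\delta = \tfrac{1}{2}\min_i \|y - x_i\| > 0$, and let $g_1$ be a continuous bump supported in the ball $B(y,\delta)$ with peak value $1$. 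Then $g_1(x_i) = 0$ exactly for every $i$, so $\mathcal{O}_{g_1}$ may legitimately return $0$ at each $x_i$ within the required precision. Running $A$ against this $\mathcal{O}_{g_1}$ yields exactly the same trace and therefore the same output ``yes,'' contradicting $g_1 \not\equiv 0$.

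The delicate point is the ``identical trace'' claim, because the oracle is only constrained to respond within a precision window (so its responses are not uniquely determined) and the machine's queries can be chosen adaptively from past answers. I would address this by observing that the oracle specification merely \emph{permits} any value in the window; we are free to fix an adversarial oracle for $g_1$ that matches the zero oracle's response at each executed query, which is valid because $g_1(x_i)=0$ exactly. Adaptivity then causes no difficulty: by induction on the step count, matched responses force matched internal configurations of $A$, which force matched subsequent queries, until $A$ halts with the same output. An alternative route is a reduction from the halting problem, encoding a Turing machine $M$ into a function $g_M$ that is identically zero precisely when $M$ fails to halt; I would mention this as a sanity check, but the direct adversary argument above fits the oracle framework of the paper most naturally and is considerably shorter.
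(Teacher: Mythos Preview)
Your proposal is correct and follows essentially the same adversary argument as the paper: run the putative decider on the zero function, collect its finitely many queries, and build a nonzero function agreeing with zero at those points so that the machine cannot distinguish the two. Your version is more detailed (handling adaptivity and the precision window explicitly), but the underlying idea is identical.
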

	
	\begin{proof}
		Suppose a Turing machine queries points \( x_1, x_2, \ldots \) and receives that \( g(x_i) = 0 \) for all \( i \). Let \( g' \) be a function that agrees with \( g \) on all queried points but satisfies \( g'(x) \neq 0 \) at some unqueried \( x \). Then no algorithm can distinguish \( g \) from \( g' \). Hence, the identity test is undecidable.
	\end{proof}
	
	\begin{lemma}\label{checklem}
		There is no algorithm to decide whether a given point \( x_k \in C \) is an \( \epsilon \)-approximation to the global minimum of \( f \).
	\end{lemma}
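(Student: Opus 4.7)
\medskip

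\textbf{Proof proposal.} My plan is to mimic the adversarial argument used in Lemma~\ref{checkhalt}: assume for contradiction that some Turing machine $\mathcal{A}$, given a finite-precision point $x_k \in C$, a precision $\epsilon > 0$, and oracle access to $f$, always halts and correctly answers whether $|f(x_k) - f(x^*)| < \epsilon$. Because $\mathcal{A}$ halts, on any particular run it issues only a finite list of oracle queries $y_1,\ldots,y_m \in C$ before producing its output. The strategy is to construct, after observing this run, a second continuous function $f'$ which is indistinguishable from $f$ through the oracle on the queries $y_1,\ldots,y_m$ but for which the correct answer to the $\epsilon$-approximation question is flipped; this forces $\mathcal{A}$ to err on at least one of the pair $(f,f')$.

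I would split into two cases according to $\mathcal{A}$'s answer on a chosen seed $f$. As the seed, take $f$ such that $x_k$ is clearly not $\epsilon$-optimal (for example, $f \equiv 0$ except for a sharp well of depth $2\epsilon$ placed far from $x_k$ and from all the $y_i$; or more simply, $f \equiv c$ constant, which makes $x_k$ $\epsilon$-optimal). If $\mathcal{A}$ outputs ``YES'' on the seed (so $x_k$ is currently $\epsilon$-optimal), I pick a finite-precision point $z \in C$ distinct from $x_k$ and from all $y_i$ (this exists because $C$ has infinitely many finite-precision points by Remark~\ref{gaprem} while the query list is finite) and define $f'(x) = f(x) - \phi(x)$, where $\phi$ is a continuous nonnegative bump function compactly supported in a small ball around $z$ that avoids the $y_i$ and $x_k$, and with $\phi(z) > f(x_k) - \min f + \epsilon$. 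Then the global minimum of $f'$ sits at $z$ with $f'(x_k) - f'(x^*_{f'}) > \epsilon$, so the correct answer becomes ``NO''. Symmetrically, if $\mathcal{A}$ outputs ``NO'' on the seed, I perturb $f$ upward everywhere except near $x_k$, again using a continuous bump supported away from $\{y_1,\ldots,y_m\}$, to bring $f'(x_k)$ within $\epsilon$ of the new minimum. In either case the oracle's responses at $y_1,\ldots,y_m$ can be kept identical to those on $f$ (the perturbation vanishes there), so $\mathcal{A}$ executes the same run and outputs the same answer on $f'$, contradicting correctness.

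The main obstacles I anticipate are bookkeeping rather than conceptual. First, I must ensure $f'$ remains continuous on $C$ while agreeing exactly with $f$ at every queried point; the bump-function construction handles this provided I can separate $z$ from the finite set $\{y_1,\ldots,y_m, x_k\}$ by a positive distance, which is automatic in a compact metric domain with infinitely many admissible points. Second, since the oracle returns only finite-precision approximations $r$ with $|r - f(y_i)| < 10^{-k_i}$, I have even more freedom: the perturbation need not be exactly zero at $y_i$, only small enough that the same rational $r$ remains a valid oracle response. Third, care is needed so that the perturbed $f'$ still belongs to the admitted class of continuous functions on compact $C$; bump functions are smooth and compactly supported, so this is immediate. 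With these points handled, the contradiction is complete and the lemma follows.
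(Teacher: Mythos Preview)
Your direct adversarial construction is correct. With the constant seed $f \equiv 0$, the YES branch (drop a deep bump-well at an unqueried point $z$) already yields the contradiction, and if $\mathcal{A}$ answers NO on the constant seed it is simply wrong on $f$ itself---so the separate NO-case perturbation you sketch is unnecessary. That is fortunate, because the NO-case sketch as written has two wrinkles: the alternative seed you describe places its well ``far from all the $y_i$'' before the $y_i$ are known, and ``perturbing $f$ upward everywhere except near $x_k$'' while keeping the $y_i$ fixed can fail if some queried $y_j$ already records a value below $f(x_k)-\epsilon$. Neither issue matters once you commit to the constant seed.

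The paper takes a different route: rather than rerunning the adversarial argument, it \emph{reduces} to Lemma~\ref{checkhalt}. Restricting to nonpositive $f$, it ties $\epsilon$-optimality of a point with value zero to whether $\min f \geq -\epsilon$, and recasts this as a zero-test on the auxiliary function $f'(x)=\max\{0,f(x)+\epsilon\}$. Your approach is more self-contained and avoids the auxiliary construction; the paper's is more modular, invoking the undecidability of zero-testing as a black box. The underlying mechanism---an adversary hiding a perturbation at an unqueried point---is the same in both.
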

	
	\begin{proof}
		Let \( f \) be a continuous function such that \( f(x) \leq 0 \) for all \( x \in C \). Then deciding whether \( f \) is identically zero is equivalent to checking whether the global minimum value is zero. Define \( f'(x) := \max\{0, f(x) + \epsilon\} \). Then \( f' \) is identically zero if and only if \( f \) is \( \epsilon \)-close to zero everywhere. Hence, deciding whether a point is \( \epsilon \)-optimal reduces to deciding whether a function is identically zero — which is undecidable by Lemma~\ref{checkhalt}.
	\end{proof}
	
	\begin{lemma}\label{optlem}
		There is no algorithm to decide whether a function value \( f(x_k) \) is within \( \epsilon \) of the global minimum.
	\end{lemma}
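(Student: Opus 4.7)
The plan is to derive this as an essentially immediate consequence of Lemma~\ref{checklem}, since the condition ``$f(x_k)$ is within $\epsilon$ of the global minimum'' is the inequality $|f(x_k) - f(x^*)| < \epsilon$, which is precisely the defining condition for $x_k$ to be an $\epsilon$-approximation. To keep the argument self-contained, I would also redo the reduction to the identity-test problem of Lemma~\ref{checkhalt} directly at the value level, rather than quoting Lemma~\ref{checklem} as a black box.

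Specifically, I would assume for contradiction that a Turing machine $\mathcal{M}$ with oracle access to $f$ halts on every input $(x_k, \epsilon)$ and correctly answers the question ``is $|f(x_k) - f(x^*)| < \epsilon$?''. Given an arbitrary continuous nonnegative function $g$ on $C$ whose identity-to-zero we wish to test, I would set $f := -g$, so that $f \leq 0$ throughout and $f(x^*) = -\sup_{x \in C} g(x)$. Choosing any finite-precision $x_k$ at which the oracle reports $f(x_k) = 0$, the machine's output on $(x_k, \epsilon)$ then decides whether $\sup_C g < \epsilon$. Mirroring the construction in Lemma~\ref{checklem}, applying this to the perturbation $g'(x) := \max\{0, g(x) - \epsilon\}$ amounts to testing whether $g'$ is identically zero, which is forbidden by Lemma~\ref{checkhalt}.

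The correctness of the reduction rests on the adversarial principle already exploited in Lemma~\ref{checkhalt}: since $\mathcal{M}$ halts, it issues only finitely many oracle queries at points $y_1, \ldots, y_N$, so one can always construct a continuous $\tilde{f}$ that coincides with $f$ on $\{x_k, y_1, \ldots, y_N\}$ up to the oracle's precision but contains a narrow ``pit'' of depth greater than $\epsilon$ at some unqueried location $z \in C$. The machine cannot distinguish $f$ from $\tilde{f}$ from its transcript and therefore returns the same verdict on both, yet the correct answers differ: for $f$ one has $|f(x_k) - f(x^*)| = 0 < \epsilon$, while for $\tilde{f}$ the gap exceeds $\epsilon$. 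This yields the desired contradiction.

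The main obstacle I anticipate is formalizing the adversarial perturbation so that $\tilde{f}$ is genuinely continuous, stays within the class of admissible functions on the compact domain, and remains consistent with the finite-precision outputs already committed to by the oracle at the queried points. This is handled by a standard bump-function construction localized in a small neighborhood of $z$ that is separated from every $y_i$ by more than the oracle's resolution $10^{-k}$ (cf.\ Remark~\ref{gaprem}); such a $z$ always exists because the query set is finite while $C \subset \mathbb{R}^d$ is uncountable, leaving ample room for the perturbation.
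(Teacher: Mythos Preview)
Your proposal is correct and follows essentially the same approach as the paper: both observe that the condition ``$|f(x_k)-f(x^*)|<\epsilon$'' is literally the defining condition for $x_k$ to be an $\epsilon$-approximation, so the result is immediate from Lemma~\ref{checklem}. The paper's proof stops at that one-line equivalence, whereas you additionally unwind the reduction to Lemma~\ref{checkhalt} and spell out the adversarial bump-function construction; this extra detail is sound but not required.
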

	
	\begin{proof}
		The argument parallels Lemma~\ref{checklem}. Since function values are only accessible via the oracle, testing whether \( f(x_k) \) is close to \( f(x^*) \) is equivalent to testing whether \( x_k \) is \( \epsilon \)-optimal.
	\end{proof}
	
	\begin{theorem}[Main Result]\label{mainthm}
		Let \( f : C \to \mathbb{R} \) be a continuous, non-convex function on a compact domain, and assume access to \( f \) only via a function oracle. Then there is no algorithm that can compute an \( \epsilon \)-approximate minimizer or optimal value of \( f \).
	\end{theorem}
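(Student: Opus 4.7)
The plan is to derive a contradiction from the existence of an $\epsilon$-approximation algorithm via an adversary/indistinguishability argument that builds directly on Lemmas~\ref{checkhalt}, \ref{checklem}, and \ref{optlem}. I would start by assuming, for contradiction, a Turing machine $A$ with oracle access that halts on every continuous non-convex $f$ and outputs a finite-precision point $x_o$ satisfying $|f(x_o) - f(x^*)| < \epsilon$. Because $A$ halts in finite time, its computation on any target $f$ invokes only finitely many oracle queries $x_1, \dots, x_N \in C$ (at various requested precisions), and its entire behaviour is determined by the finite transcript of oracle responses.

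The central step is to construct an adversarial continuous function $f'$ that produces the same transcript as $f$ but whose global minimum lies strictly below $f(x_o) - \epsilon$. After fixing a convenient target $f$ (for instance the constant function $0$) and observing $A$'s run on $f$, I would pick a point $z \in C$ at distance at least some $\rho > 0$ from every queried $x_i$, which is possible since the compact domain $C$ cannot be covered by finitely many arbitrarily small balls. I would then set $f'(x) := f(x) - M \cdot \phi(x)$, where $\phi$ is a continuous nonnegative bump supported in $B(z,\rho/2)$ with $\phi(z)=1$, and $M$ is chosen large enough that $f'(z) < f(x_o) - \epsilon$. Because $\phi$ vanishes on a neighbourhood of every queried point, $f'(x_i) = f(x_i)$ for all $i$, so every rational response returned by the oracle for $f$ is also a valid response for $f'$ at the same requested precision. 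Consequently $A$ follows exactly the same computational path when run against the oracle for $f'$ and outputs the same $x_o$; yet $x_o$ is no longer $\epsilon$-optimal for $f'$, giving the desired contradiction. The same transcript construction immediately settles the optimal-value question, since $f'(x^*) \ne f(x^*)$ while $A$'s numerical output is unchanged.

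The main obstacle is ensuring that the transcripts are genuinely identical under the adaptivity of $A$: the queries $x_1, \dots, x_N$ depend on prior oracle answers, so a priori the interaction against $f'$ might diverge from the interaction against $f$. I would handle this by induction on the step number: since $f$ and $f'$ agree on a neighbourhood of every already-queried point, the $k$-th oracle response for $f'$ can be chosen identical to that for $f$, hence the $(k+1)$-st query issued by $A$ is the same against both oracles. Continuity of $f'$ is immediate from continuity of $f$ and $\phi$, and the bump can be taken as smooth as required. With these pieces in place, the reduction establishes non-computability of both the $\epsilon$-minimizer and the $\epsilon$-minimal value, completing the proof of Theorem~\ref{mainthm}.
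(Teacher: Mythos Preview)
Your argument is essentially correct, but it is \emph{not} the route the paper takes. The paper's proof is a short reduction: it assumes an algorithm returning an $\epsilon/2$-approximate minimizer $x_k$, observes that one could then test any candidate $x$ for $\epsilon$-optimality by comparing $f(x)$ with $f(x_k)$, and invokes Lemma~\ref{checklem} (no decider for $\epsilon$-optimality) to obtain a contradiction. In other words, the paper pushes all the adversary work into Lemmas~\ref{checkhalt}--\ref{optlem} and finishes the theorem with a two-line reduction.

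You instead reprove the adversary idea from scratch, directly against the hypothetical optimizer: run $A$ on a fixed target, record the finite query set, and plant a bump at an unqueried location to move the minimum without changing the transcript. This is the standard information-theoretic indistinguishability argument and is arguably cleaner and more self-contained than the paper's layered reductions; it also makes explicit the inductive handling of adaptivity, which the paper leaves implicit in Lemma~\ref{checkhalt}. The trade-off is that the paper's decomposition yields Lemmas~\ref{checklem} and~\ref{optlem} as independently citable statements, whereas your proof subsumes them. One small fix: do not take the base function to be the constant $0$, since the hypothesis only guarantees that $A$ succeeds on \emph{non-convex} inputs and $A$ might fail to halt on a convex one; start instead from any fixed continuous non-convex $f$ (the bump construction then still produces a continuous non-convex $f'$, so the contradiction goes through unchanged).
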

	
	\begin{proof}
		From Lemma~\ref{preclem}, we know there exists a point \( x^*_\epsilon \) of finite precision such that \( |f(x^*_\epsilon) - f(x^*)| < \epsilon \). Suppose there exists an algorithm that can compute some point \( x_k \) such that \( |f(x_k) - f(x^*)| < \epsilon/2 \). Then we could check whether any new point \( x \) satisfies \( |f(x) - f(x_k)| < \epsilon/2 \), thereby deciding whether \( x \) is \( \epsilon \)-optimal — which contradicts Lemma~\ref{checklem}. The same contradiction applies to the function value, completing the proof.
	\end{proof}
	
	\begin{corollary}
		It is undecidable whether a given local minimum is a global minimum.
	\end{corollary}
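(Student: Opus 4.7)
The plan is to reduce from Lemma~\ref{optlem}. Suppose for contradiction that an algorithm $D$ exists which, given the oracle for a continuous function $g$ and a point $x_0$ that is a local minimum of $g$, decides whether $x_0 \in G^g$. I will show that $D$ can be used to decide whether an arbitrary point $x_k$ satisfies $|f(x_k) - f(x^*)| < \epsilon$, contradicting Lemma~\ref{optlem}.

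Given an arbitrary oracle $\mathcal{O}_f$ and candidate $x_k \in C$, I would first query $\mathcal{O}_f$ at $x_k$ to precision far finer than $\epsilon$, obtaining a rational approximation $v$ of $f(x_k)$, and then define the auxiliary function
\[
\tilde{f}(x) := \min\bigl\{\, f(x),\; v - \epsilon \,\bigr\}.
\]
As the pointwise minimum of two continuous functions, $\tilde{f}$ is continuous, and its oracle is implementable by a single call to $\mathcal{O}_f$ followed by a minimum with the stored rational $v - \epsilon$. By construction $\tilde{f}(x_k) = v - \epsilon$, and by continuity of $f$ at $x_k$ there is a neighborhood $U$ of $x_k$ on which $f(y) > v - \epsilon$, so $\tilde{f}(y) = v - \epsilon = \tilde{f}(x_k)$ for all $y \in U$. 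Thus $x_k$ is a plateau local minimum of $\tilde{f}$, and this is true by construction rather than by any oracle inspection.

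Next I would observe that $\min_{x \in C} \tilde{f}(x) = \min\{v - \epsilon,\, \min_C f\}$, so $x_k$ is a global minimum of $\tilde{f}$ if and only if $\min_C f \geq v - \epsilon$, which (up to the precision of $v$) is precisely the condition $|f(x_k) - f(x^*)| < \epsilon$. Invoking $D$ on $(\tilde{f}, x_k)$ would therefore decide the $\epsilon$-optimality question for $(f, x_k)$, contradicting Lemma~\ref{optlem}. The main delicate step is ensuring that the preliminary query for $v$ is fine enough that the reduction's threshold $v - \epsilon$ does not flip the decision near the boundary case $f(x_k) = f(x^*) + \epsilon$; using precision substantially finer than $\epsilon$ (for instance $\epsilon/4$) and then replacing $\epsilon$ by $\epsilon/2$ in the construction suffices, and such precision is guaranteed by the oracle-model assumptions.
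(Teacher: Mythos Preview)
Your argument is correct. The paper states the corollary bare, with no proof, evidently regarding it as an immediate by-product of the chain Lemma~\ref{checkhalt}--Lemma~\ref{optlem}--Theorem~\ref{mainthm}. Your reduction from Lemma~\ref{optlem} via the truncated function $\tilde f(x)=\min\{f(x),\,v-\epsilon\}$ is a clean and explicit way to supply those details: the construction \emph{forces} $x_k$ to be a (plateau) local minimum of $\tilde f$ without any oracle verification, and converts the global-minimality question for $x_k$ into the threshold test ``$\min_C f \ge v-\epsilon$'', which is undecidable by the zero-testing argument behind Lemmas~\ref{checkhalt}--\ref{optlem}. One minor remark: your discussion of the boundary case $f(x_k)=f(x^*)+\epsilon$ is more than strictly needed---once $D$ decides ``$\min_C f \ge c$'' for an arbitrary rational threshold $c$, you already contradict Lemma~\ref{checkhalt} directly, independently of whether the answer aligns exactly with ``$|f(x_k)-f(x^*)|<\epsilon$'' at the borderline.
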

	
	\begin{remark}
		Even with access to higher-order oracles (e.g., for gradients or Hessians), the reduction from zero-testing still applies. Hence, global optimization remains non-computable in such cases.
	\end{remark}
	
	\begin{remark}
		Our oracle model is distinct from the setting of computable analysis, where both the function and reals are assumed to be computable (see \cite{pour}). Here, only pointwise oracle access is available.
	\end{remark}
	
	\begin{remark}
		The finite-precision model is not restrictive. If a real solution can be computed, its finite-precision truncation yields a valid \( \epsilon \)-approximate solution.
	\end{remark}
	
	\begin{remark}
		This result does not apply to local minima, as the (negative) function need not be zero if its local optima is zero.
	\end{remark}

	\section{Characterizing Global Optima Computability}
	
	In this section, we characterize when the global optimum of a function is computable using the notion of a predicate that encapsulates an approximate decision rule over the domain.
	
	\begin{definition}
		Let \( f : C \rightarrow \mathbb{R} \) be a continuous function. Define the binary predicate \( P^f(x, y) \) as:
		\[
		P^f(x, y) = 
		\begin{cases}
			\text{True} & \text{if } f(x) \leq f(y), \\
			\text{False} & \text{otherwise}.
		\end{cases}
		\]
	\end{definition}
	
	\begin{definition}
		Let \( Q(\zeta, x, y) \) be a computable ternary predicate. We say \( P^f \subset Q \) if there exists a real number \( \zeta \) such that for all \( x, y \in C \),
		\[
		P^f(x, y) = Q(\zeta, x, y).
		\]
	\end{definition}
	
	\begin{definition}
		We say the predicate \( Q(\zeta, x, y) \) is \emph{true} for a function \( f \) if there exists \( \zeta \in \mathbb{R} \) such that it holds for all \( x, y \in C \). It is \emph{computable} if, for any input \( x, y \), the value of \( Q(\zeta, x, y) \) can be evaluated by a Turing machine given \( \zeta \).
	\end{definition}
	
	We now prove that such a predicate \( Q \) provides a necessary and sufficient condition for the computability of global optima.
	
	\begin{theorem}\label{gloprop}
		The global optimum (value or minimizer) of a function \( f \) can be approximated to any desired accuracy if and only if there exists a computable predicate \( Q(\zeta, x, y) \) such that \( P^f \subset Q \) and \( Q \) is true.
	\end{theorem}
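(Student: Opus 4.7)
The plan is to prove the two directions separately, viewing the predicate $Q$ together with its witness $\zeta$ as a computable comparator for $f$-values on the finite-precision grid.

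For the sufficiency direction ($\Leftarrow$), I would argue as follows. Fix a desired output precision $\epsilon > 0$. By Lemma~\ref{preclem} together with continuity of $f$, there is a precision length $k$ such that the finite-precision grid of length $k$ contains a point $x^*_\epsilon$ with $|f(x^*_\epsilon) - f(x^*)| < \epsilon/2$. I would enumerate the finitely many grid points of length $k$ inside $C$ and use $Q(\zeta,\cdot,\cdot)$ to find, by pairwise comparison alone, the grid point $\hat{x}_k$ with smallest $f$-value; crucially no oracle queries are needed for these comparisons. A single final oracle call at $\hat{x}_k$ with requested precision $\epsilon/2$ then returns an $\epsilon$-approximation to $f(x^*)$, and $\hat{x}_k$ itself is an $\epsilon$-approximate minimizer.

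For the necessity direction ($\Rightarrow$), the task is to build $Q$ and $\zeta$ from an algorithm $A$ that $\epsilon$-approximates the global optimum. Since the finite-precision points $z_1, z_2, \ldots$ in $C$ form a countable set, I would encode into a single real $\zeta$ the oracle-obtained approximations of $f(z_i)$ at precision $2^{-i}$, packed into disjoint decimal blocks whose widths grow fast enough for unambiguous decoding. The predicate $Q(\zeta,x,y)$ is then defined by decoding the blocks corresponding to $x$ and $y$, obtaining approximations of $f(x)$ and $f(y)$ to enough precision to decide strict comparison; in the exceptional case $f(x) = f(y)$, a lexicographic tie-break on the coordinates of $x$ and $y$ ensures that $Q$ returns True exactly when $P^f$ does.

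The main obstacle I anticipate is the necessity direction. Strict comparison of $f(x)$ and $f(y)$ is only semi-decidable from oracle access, and equality is not semi-decidable at all, which is why a naive construction that merely queries the oracle during evaluation of $Q$ fails. Embedding the relevant $f$-values into the single advice real $\zeta$ sidesteps this, but care is needed to ensure that $\zeta$ is a bona fide real number with an effectively decodable encoding scheme, and that the construction stays inside the oracle model of the paper, where only finite-precision arguments $x,y$ ever need to be compared. Once these encoding details are settled, $Q$ is computable in the standard Turing-machine sense given $\zeta$ as input, closing the equivalence.
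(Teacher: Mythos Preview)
Your sufficiency direction is close in spirit to the paper's: both use $Q(\zeta,\cdot,\cdot)$ purely as a comparator for $f$-values. The paper builds a decreasing sequence $x_0,x_1,\ldots$ by repeatedly choosing $x_{k+1}$ with $Q(\zeta,x_{k+1},x_k)$ true and then appeals to compactness, whereas you run a direct grid search at a fixed precision and take the $Q$-minimum. Your version is more explicit, but note that Lemma~\ref{preclem} only asserts the \emph{existence} of a suitable precision length $k$; you have not explained how the algorithm computes this $k$ (that would require a modulus of continuity or similar side information), so the halting step is not yet justified. The paper's argument is equally loose on this point, so this is not a place where you diverge.

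The necessity direction, however, differs substantially from the paper and has a genuine gap. The paper's argument is short: assuming $x^*$ is computable, it sets $\zeta:=\|x^*\|$ and $Q(\zeta,x,y):=(f(x^*)\le f(y))$, which is trivially true and computable once $x^*$ is in hand. Your construction instead packs finite-precision oracle approximations of all the $f(z_i)$ into $\zeta$ and never actually uses the hypothesis that the global optimum is computable --- your $\zeta$ could be built for \emph{any} $f$ with oracle access. More seriously, the encoding does not make $P^f$ decidable: you store $f(z_i)$ only to precision $2^{-i}$, so whenever $|f(z_i)-f(z_j)|$ is below the combined error you cannot determine the sign of $f(z_i)-f(z_j)$, and no finite portion of $\zeta$ resolves this. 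The lexicographic tie-break does not rescue the construction either: when $f(x)=f(y)$ the predicate $P^f$ is True in \emph{both} orders $(x,y)$ and $(y,x)$, whereas a lexicographic rule is asymmetric, so your $Q$ would fail to agree with $P^f$ on such pairs and $P^f\subset Q$ (which requires equality of the predicates) would not hold. In short, embedding oracle data into $\zeta$ does not circumvent the undecidability of real comparison that you yourself flagged; the paper sidesteps the issue entirely by tying $\zeta$ and $Q$ directly to the assumed-computable minimizer rather than to the full table of $f$-values.
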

	
	\begin{proof}
		(\emph{If} direction.) Suppose such a computable predicate \( Q \) exists. Then there exists a computable \( \zeta \) such that for all \( x, y \in C \), \( P^f(x, y) = Q(\zeta, x, y) \). Let \( x_0 \in C \) be arbitrary. Since \( Q \) is computable, we can compute whether any candidate point \( x \) satisfies \( Q(\zeta, x, x_0) \). Define a sequence \( x_{k+1} \) such that \( Q(\zeta, x_{k+1}, x_k) \) holds. Because \( P^f \subset Q \), this implies \( f(x_{k+1}) \leq f(x_k) \), and hence \( \{f(x_k)\} \) is a decreasing sequence bounded below. By compactness, \( \{x_k\} \) has a convergent subsequence whose limit is a global minimizer. Since each \( x_k \) is computable (finite-precision), the limit is effectively approximable.
		
		(\emph{Only if} direction.) Assume the global minimizer \( x^* \) is computable. Then \( \zeta := \|x^*\| \) is also computable. Define \( Q(\zeta, x, y) := f(x^*) \leq f(y) \). This predicate is clearly computable and true, and satisfies \( P^f \subset Q \).
	\end{proof}
	
	\begin{remark}
		This result gives a semantic condition for computability: global optima can be approximated if the dominance relation \( f(x) \leq f(y) \) can be embedded in a computable predicate parameterized by a real number \( \zeta \). The predicate acts as a global certificate or guide for optima.
	\end{remark}
	
	\begin{remark}
		Lipschitz continuity is one such property. Define
		\[
		Q(L, x, y) := |f(x) - f(y)| \leq L \|x - y\|.
		\]
		Here, \( \zeta = L \), the Lipschitz constant. If this constant (or an upper bound) is known, then it is known from the literature (e.g., \cite{nest}, Theorem 1.1.2) that the global minimum can be approximated to arbitrary accuracy. Similarly, known bounds on derivatives or gradient norms can serve as computable global properties.
	\end{remark}

	\section{An Algorithm with Known Basin of Attraction}
	
		In this section, we also assume the function $f$ to be differentiable. Let us denote the gradient by $\triangledown f(x)$. The algorithm takes as input the lower bound $m$ on the basin of attraction of the global minimizer. By basin of attraction we mean the following: if we let the initial point to be in the hypercube of length $m$ in all co-ordinates, i.e., in the basin of attraction around the global minima $B_m(x^*)$ then the gradient descent algorithm will converge to the minima. This is another example of a global optima property (independent of the last argument), 
	\[Q(m,x,\cdot) := \mbox{ If }x \in B_m(x^*) \mbox{ and } x \neq x^* \mbox{ then } \triangledown f(x) \neq 0 .\]
	Note that there exists a $m$ such that $Q(m,x,\cdot)$ is True. If this basin of attraction size $m$ can be computed or its lower bound known then we can compute the global minima (Theorem \ref{gloprop}) to any approximation. In this section, we given an algorithm which converges to the global minima if this basin length $m$ or its lower bound is known.
	
	The algorithm finds the point $z_k$ where the function takes a minimum amongst all points at a distance of $m$ from each other and does a gradient descent step from the point $z_k$. The algorithm outputs an $\epsilon$-approximation to the global optima (see Theorem \ref{linconvthm} for the value of $\epsilon$). In this algorithm for simplicity, we do not consider line searches and use constant step-size $t > 0$. The figure \ref{figalg} shows the gradient descent step taken at the point which has the minimum function value amongst all the points in the grid.
	
	We note the similarity of our algorithm with the one considered in paper \cite{helon}, where the basin of attraction of global minimizer is first found by searching then a gradient descent is performed. In our algorithm, these two steps are interleaved. The major issue with their algorithm is that they assume the value of the global minima is known which they assume to be zero. But this need not be known in real-world problems. This assumption is not needed with our approach. Moreover, we have formally shown the convergence of our algorithm.
		
	\begin{algorithm}[ht]
		\caption{Global Optimization Algorithm}
		\textbf{Input:} Oracle access to function \( f \), and a lower bound \( m \) on the side length of a hypercube fully contained within the basin of attraction of the global minimizer. \\
		\textbf{Output:} An \( \epsilon \)-approximation to the global minimizer.
		\hrule
		\begin{algorithmic}[1]
			\STATE Let \( C = [a, b]^d \). For simplicity, assume all coordinates share the same interval.
			\STATE Set \( y_0 = [a, a, \dots, a] \in \mathbb{R}^d \).
			\STATE Generate grid points \( y_j = y_{j-1} + m \), for \( j = 1, \ldots, (b - a)/m \) in each dimension.
			\STATE Let \( z_0 = \arg\min_j \{ f(y_j) \} \), and set \( x_0 := z_0 \).
			\FOR{$k = 1, \ldots, \mathcal{L}$}
			\STATE Define the next grid anchor \( y_0 = x_{k-1} - t \nabla f(z_{k-1}) \).
			\STATE Generate grid points \( y_j = y_{j-1} + m \), for \( j = 1, \ldots, (b - a)/m \).
			\STATE Let \( z_k = \arg\min_j \{ f(y_j) \} \).
			\STATE Set \( x_k = z_k - t \nabla f(z_k) \).
			\ENDFOR
			\RETURN \( x_k \)
		\end{algorithmic}
	\end{algorithm}
		
	\begin{figure}[!ht] 
		\centering
		\begin{tikzpicture}
			\begin{axis}[
				axis lines = left,
				xticklabels = \empty,
				yticklabels = \empty,
				width = 0.55\textwidth,
				xlabel = $x$,
				ylabel = {$f(x)$},
				ymax = 40,
				ymin = 5,
				grid = both,
				ticks = both,
				minor x tick num = 2,
				axis lines=middle,
				]
				
				\node[anchor=west] (source) at (axis cs:15,9.5){};
				\node (destination) at (axis cs:12.5,14){};
				\draw[->](destination)--(source);
				\node[anchor=west] at (destination) {Gradient descent step};
				\addplot[
				color=blue,
				mark=none,smooth,
				]
				coordinates {
					(0,23.1)(10,17.5)(20,8)(30,27.8)(40,34.6)(60,14.8)(80,18.8)(100,20)
				};
			\end{axis}
		\end{tikzpicture}
		\caption{The function $f$ to be minimized. Gradient descent step is shown for the interval where the function value is minimum. This interval is a subset of the basin of attraction of global minima.}
		\label{figalg}
	\end{figure}

	\section{Convergence Analysis}
	We show the convergence of the algorithm given in the preceding section. We make the following assumption.
	
	\begin{assumption}\label{ass1}
		The function $f$ is twice differentiable. The gradient of $f$ is Lipschitz continuous with constant $0 < L < 1$, i.e., \[ \parallel \triangledown f(x) -\triangledown f(z) \parallel_2 \leq L \parallel x -z \parallel_2. \]
		That is we have $\triangledown^2 f(x) \preceq LI.$
	\end{assumption}
	
	We first state the following lemma used in the proof of the convergence theorem.
	
	\begin{lemma} \label{boundlem}
		Assume that the function $f$ satisfies Assumption \ref{ass1} and the step-size $t \leq 1/L$. We also assume that the global minima $x^*$ is unique. Then there exists a constant $R > 0$ such that for all balls $B(x^*,r)$ with radius $r < R$, there is a $M_{r} > 0$ and that the iterates of the algorithm $\{x_k\}$ remains in this ball $B(x^*,r)$ asymptotically, i.e., $x_k \in B(x^*,r)$ for $k \geq M_{r}$.
	\end{lemma}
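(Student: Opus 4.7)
The plan is to combine three ingredients: (i) the basin-of-attraction hypothesis, which forces at least one point of every grid-search step to land inside $B_m(x^*)$; (ii) the standard descent lemma under Lipschitz gradients, which makes each gradient step non-increasing in function value; and (iii) uniqueness of the global minimizer together with compactness of $C$, which promotes function-value convergence to iterate convergence.

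First, I would observe that since $C$ is covered by a regular grid of spacing $m$ and $B_m(x^*)$ is a hypercube of side $m$ about $x^*$, every grid generated by the algorithm contains at least one point $y^{\star}_k \in B_m(x^*)$. Hence by the argmin choice of $z_k$ we get $f(z_k) \le f(y^{\star}_k)$. Using $\nabla^2 f \preceq LI$ from Assumption~\ref{ass1} and a second-order Taylor estimate at $x^*$, this gives a uniform bound of the form $f(z_k) - f(x^*) \le \tfrac{L m^2 d}{8}$, which locates every $z_k$ in a fixed sublevel set of $f$.

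Next, I would apply the descent lemma to the gradient step $x_k = z_k - t\nabla f(z_k)$. With $t \le 1/L$ this yields
\[
f(x_k) \le f(z_k) - \tfrac{t}{2}\|\nabla f(z_k)\|^2 \le f(z_k),
\]
so the gradient step never increases the function value and the per-step decrement is summable whenever $f$ is bounded below. Together with the grid bound on $f(z_k)$ this shows that the sequence $\{f(x_k)\}$ is bounded, its lim inf equals $f(x^*)$ (via the basin grid point $y^{\star}_k$ and the fact that gradient descent from any point of $B_m(x^*)$ converges to $x^*$), and $\|\nabla f(z_k)\| \to 0$ along a subsequence.

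Finally, I would invoke compactness of $C$ to extract a convergent subsequence of $\{x_k\}$; any accumulation point must be a critical point of $f$ by continuity of $\nabla f$, and by the uniqueness of $x^*$ together with the basin-of-attraction property (no other critical point of $f$ can have value arbitrarily close to $f(x^*)$ on $C$), every accumulation point equals $x^*$. This upgrades subsequential convergence to full convergence $x_k \to x^*$, which is exactly the asymptotic containment $x_k \in B(x^*, r)$ for $k \ge M_r$, with $R$ chosen as any radius for which $B(x^*, R) \subset B_m(x^*)$.

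The main obstacle I expect is making the ``decreasing function value'' claim genuinely monotone across iterations: the grid for iteration $k$ is re-anchored at $y_0 = x_{k-1} - t\nabla f(z_{k-1})$ rather than at $x_{k-1}$ itself, so $x_{k-1}$ need not appear among the candidates and the na\"ive chain $f(z_{k+1}) \le f(x_k) \le f(z_k)$ does not follow for free. The cleanest workaround is to avoid monotonicity altogether and instead argue convergence through the uniform sublevel-set bound on $f(z_k)$, the summability of the gradient-step decrement, and uniqueness of the critical point inside that sublevel set; this also explains why $R$ must be small enough to sit inside the basin so that the basin property can close the argument.
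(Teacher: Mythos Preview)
Your approach differs substantially from the paper's and has a genuine gap. The paper never argues via accumulation points of $\{x_k\}$ being critical points. Instead it defines $\delta := \min_{\tilde x}\bigl(f(\tilde x)-f(x^*)\bigr)$ over all non-global local minima $\tilde x$, takes $R \le \delta/L$, and uses the quadratic upper bound $f(x)\le f(x^*)+\tfrac{L}{2}\|x-x^*\|_2^2$ to conclude that every point of $B(x^*,R)$ has function value strictly below $f(\tilde x)$ for every competing local minimum $\tilde x$. Combined with the per-step descent (your ingredient~(ii)) and the argmin rule for $z_k$, this value separation is what the paper invokes to trap the iterates.

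The gap in your route is the step ``by the uniqueness of $x^*$ \ldots\ every accumulation point equals $x^*$.'' Uniqueness of $x^*$ as a \emph{global minimizer} does not make it the unique \emph{critical point}, even inside the sublevel set $\{f\le f(x^*)+Lm^2d/8\}$: there can be saddles or non-global local minima there, and the basin hypothesis only rules out critical points other than $x^*$ \emph{inside} $B_m(x^*)$, not elsewhere in that sublevel set. So knowing that a subsequential limit is critical does not identify it as $x^*$. Relatedly, the ``summability of the gradient-step decrement'' you invoke to get $\|\nabla f(z_k)\|\to 0$ needs a telescoping bound across iterations, which is precisely the cross-iteration monotonicity you have already (correctly) flagged as unavailable. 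To close either of these you would need a separation of $f(x^*)$ from the values at all other critical points, i.e.\ exactly the gap quantity $\delta$ the paper introduces; once you have that, the paper's direct trapping argument is shorter than the compactness/critical-point detour. Your observation that the re-anchored grid breaks the naive chain $f(z_{k+1})\le f(x_k)\le f(z_k)$ is well taken; the paper's proof simply asserts the needed monotonicity without addressing this point.
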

	
	\begin{proof}
		From assumption \ref{ass1} we have that $\triangledown^2 f(x) - LI$ is negative semi-definite matrix. Using a quadratic expansion of $f$ around $f(x^*)$, we obtain the following inequality for $x \in B(x^*,r)$
		\begin{align}
			\nonumber f(x) &\leq f(x^*) + \triangledown f(x^*)^T (x-x^*) + \frac{1}{2} \triangledown^2 f(x^*) \parallel x-x^* \parallel_2^2 \\
			f(x) &\leq f(x^*) + \frac{1}{2} L \parallel x-x^* \parallel_2^2 \label{basbound}
		\end{align}
		Since $x^*$ is a global minima we have $f(x^*) \leq f(x)$ for all $x \in C$. Let $\tilde x$ be any local minima which is not global minima. Hence $f(\tilde x) =  f(x^*) + \delta_{\tilde x} $. Now let $\delta = \min_{\tilde x} \delta_{\tilde x}$. Since $\tilde x$ is local minima but not global minima we have $\delta > 0$. Take $R > 0$ such that for any $x \in B(x^*,R)$, 
		\[ \frac{L}{2}\parallel x - x^* \parallel_2^2 \leq \frac{\delta}{2} \] or that
		$R \leq \frac{\delta}{L}.$
		Now we have from equation \eqref{basbound}
		\[ f(x) \leq f(x^*) + \frac{\delta}{2}, \]
		for $x \in B(x^*,R)$. That is we have shown there exists a $R > 0$ such that for any $x \in B(x^*,R)$, 
		\begin{equation}
			f(x) \leq f(\tilde x). \label{ballremain}
		\end{equation}
		Now we observe the following:
		\begin{enumerate}
			\item from equation \eqref{ballremain} we can see that no other local minima can have a value $f(\tilde x)$, lower than the function value in this ball $B(x^*,R)$
			\item for sufficiently small step-size $t \leq 1/L$, the function value decreases with each gradient step (see equation \eqref{fybound} in proof of Theorem \ref{linconvthm})
		\end{enumerate}
		That is if $x \in B(x^*,R)$, the iterates in the algorithm can not move to another hypercube around some local minima $\tilde x$. Or that for all $r < R$ there exists $M_{r} > 0$ such that for $k \geq M_{r}$ the iterates remain in the ball $B(x^*,r)$ around $x^*$. 
	\end{proof}
	
	\begin{theorem}
		Let $x^*$ be the unique global minimizer of the function $f$. We have for the iterates $\{x_k \}$ generated by the algorithm \[ \lim_{k \rightarrow \infty} f(x_k) = f(x^*). \]
	\end{theorem}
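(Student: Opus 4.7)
The plan is to prove the stronger statement that the iterates themselves satisfy $x_k \to x^*$; continuity of $f$ then immediately yields $f(x_k) \to f(x^*)$. The heart of the argument has already been packaged into Lemma~\ref{boundlem}, so the present proof is mostly a matter of assembling ingredients.

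First I would write down the standard $L$-smooth descent inequality. Since $\nabla f$ is $L$-Lipschitz and $t \leq 1/L$, a quadratic upper bound on $f$ applied at $z_k$ gives
\[
f(x_k) \;=\; f\bigl(z_k - t\nabla f(z_k)\bigr) \;\leq\; f(z_k) - \tfrac{t}{2}\,\|\nabla f(z_k)\|_2^2 \;\leq\; f(z_k).
\]
Combined with the fact that the grid in iteration $k$ has spacing $m$ while the hypercube basin $B_m(x^*)$ has side $m$, at least one grid point falls inside $B_m(x^*)$, so by minimality of $z_k$ the value $f(z_k)$ is no worse than the best value achieved inside the basin. Hence the sequence $\{f(x_k)\}$ is monotonically non-increasing and bounded below by $f(x^*)$, so it already converges to some limit.

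Next I would invoke Lemma~\ref{boundlem}, which asserts that for every $r$ smaller than some threshold $R > 0$ there exists an index $M_r$ with $x_k \in B(x^*, r)$ for all $k \geq M_r$. This is exactly the statement $x_k \to x^*$. To conclude the theorem, I would fix $\varepsilon > 0$ and use continuity of $f$ at $x^*$ to choose $r < R$ so that $\|x - x^*\|_2 < r$ implies $|f(x) - f(x^*)| < \varepsilon$. Then $|f(x_k) - f(x^*)| < \varepsilon$ for all $k \geq M_r$, which is precisely $\lim_{k \to \infty} f(x_k) = f(x^*)$.

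The main obstacle is checking that Lemma~\ref{boundlem} really applies as stated in the present context, in particular that once the iterates enter $B(x^*, R)$ they cannot later escape. The lemma handles this by combining the descent inequality above with the strictly positive gap $\delta$ between $f(x^*)$ and the value at the next-best local minimum: leaving the basin would force the next iterate toward a competing local minimum whose value exceeds $f(x^*) + \delta/2$, contradicting monotone descent. The remaining technicality is confirming that at every iteration the grid really does contain a point inside $B_m(x^*)$, regardless of where the anchor $y_0 = x_{k-1} - t\nabla f(z_{k-1})$ is placed; this follows from the covering property of a spacing-$m$ grid over the compact domain $C$, after which the theorem follows at once.
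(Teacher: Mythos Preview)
Your proposal is correct and follows essentially the same route as the paper: both proofs invoke Lemma~\ref{boundlem} to place the iterates inside arbitrarily small balls $B(x^*,r)$, note the monotone descent of $\{f(x_k)\}$, and then conclude. The only difference is the concluding step: you finish via continuity of $f$ at $x^*$ (since Lemma~\ref{boundlem} already gives $x_k \to x^*$), whereas the paper takes $r$ small enough that $B(x^*,r)$ lies inside the basin of attraction and appeals to gradient-descent convergence there---your version is arguably the cleaner of the two.
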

	
	\begin{proof}
		Now from Lemma \ref{boundlem} we have $R > 0$ such that for all $r < R$ there exists $M_{r} > 0$ with $x_k \in B(x^*,r)$ for $k \geq M_{r}$. From the algorithm we also know that the function value decreases with each iteration. Thus we see that the sequence $\{f(x_k)\}$ converges as it is monotonic and bounded. Take a sufficiently small $r < R$, such that $B(x^*,r)$ lies in the basin of attraction. Hence we also have that $\lim_{k \rightarrow \infty} f(x_k) = f(x^*)$ as in the basin of attraction around the global minima the gradient descent converges to the minima.
	\end{proof}
	
	\begin{theorem}\label{linconvthm}
		Let $x^*$ be the unique global minimizer of the function $f$. For simplicity denote $M = M_{r}$. Let step-size $t \leq 1/L$ where  $L$ is Lipschitz constant of the gradient function in Assumption \ref{ass1}. If we also assume that the function is convex in the ball $B(x^*,r)$ we can show that at iteration $k > M$, $f(x_k)$ satisfies
		\[ f(x_k) - f(x^*) \leq \frac{\parallel x_{M} - x^* \parallel_2^2}{2 t (M-k)} .\]
		That is the gradient descent algorithm converges with rate $O(1/k)$.
	\end{theorem}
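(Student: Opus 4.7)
The plan is to invoke Lemma~\ref{boundlem} to restrict attention to $k > M$, where all iterates lie inside the ball $B(x^*,r)$ on which (by hypothesis) $f$ is convex and $\nabla f$ is $L$-Lipschitz. Inside this ball the update $x_k = z_k - t\nabla f(z_k)$ is just a gradient-descent step on a smooth convex function, so the textbook proof of the $O(1/k)$ rate for convex smooth GD should carry over essentially verbatim.

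Concretely, I would first apply the quadratic upper bound implied by Assumption~\ref{ass1} to the gradient step to derive the per-step descent
\[ f(x_k) \le f(z_k) - t\bigl(1 - Lt/2\bigr)\|\nabla f(z_k)\|_2^2 \le f(z_k) - \tfrac{t}{2}\|\nabla f(z_k)\|_2^2, \]
which holds whenever $t \le 1/L$; this is the inequality already promised as \eqref{fybound} in the proof of Lemma~\ref{boundlem}. Next, convexity on $B(x^*,r)$ gives $f(z_k) - f(x^*) \le \nabla f(z_k)^T(z_k - x^*)$. Expanding $\|x_k - x^*\|_2^2 = \|z_k - x^*\|_2^2 - 2t\nabla f(z_k)^T(z_k - x^*) + t^2\|\nabla f(z_k)\|_2^2$ and substituting these two bounds yields the Lyapunov-type recursion
\[ \|x_k - x^*\|_2^2 \le \|z_k - x^*\|_2^2 - 2t\bigl(f(x_k) - f(x^*)\bigr). \]
Telescoping this across $k = M+1, \ldots, K$ and using the monotonicity of $\{f(x_k)\}$—which follows because both the gradient step and the grid search can only decrease the function value—bounds $(K-M)\bigl(f(x_K)-f(x^*)\bigr)$ above by $\|x_M - x^*\|_2^2/(2t)$, producing the stated rate.

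The main obstacle is that the algorithm is not pure gradient descent: between $x_k$ and $z_{k+1}$ there is a grid search, so $z_{k+1}$ is not literally $x_k - t\nabla f(x_k)$ and the telescoping in the Lyapunov bound is not automatic. I would handle this by noting that $z_{k+1}$ is chosen to minimize $f$ over a grid that includes the nominal continuation point $y_0 = x_k - t\nabla f(z_k)$, so $f(z_{k+1}) \le f(y_0)$, and by using convexity inside $B(x^*,r)$ to argue that the distance $\|z_{k+1} - x^*\|_2$ is no larger than the corresponding distance along the pure-GD path (absorbing any grid-scale discrepancy into an $O(m)$ error that is negligible once the basin is entered). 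Verifying this handoff—so that the per-iteration descent in $\|\cdot - x^*\|_2^2$ chains cleanly across the grid-search transition from $x_k$ to $z_{k+1}$—is the step I expect to require the most care.
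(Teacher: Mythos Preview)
Your overall plan---descent lemma from the $L$-smooth upper bound, convexity inequality inside $B(x^*,r)$, then telescope and use monotonicity---is exactly the route the paper takes, and your derivation of the per-step inequalities matches the paper's almost line for line.

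The one place you diverge is in handling the grid search, and there you are making life harder than necessary. You treat the transition $x_k \to z_{k+1}$ as a genuine obstacle and propose to control it by arguing that the grid minimizer cannot move farther from $x^*$ than the pure-GD continuation would, absorbing an $O(m)$ slack. That argument is delicate (convexity alone does not obviously give $\|z_{k+1}-x^*\|_2 \le \|y_0 - x^*\|_2$), and it is not what the paper does. The paper instead disposes of the issue in one line: by Lemma~\ref{boundlem}, for $k \ge M_r$ the iterates sit inside $B(x^*,r)$, where the function value is strictly below the value at every other local minimum; hence the $\arg\min$ over the shifted grid is always the anchor point already in the ball, and the paper simply writes $z_k = x_k$. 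In other words, once $k \ge M$ the algorithm \emph{is} pure gradient descent, the recursion $x_{k+1} = x_k - t\nabla f(x_k)$ holds literally, and the telescoping goes through with no handoff to verify. So your ``step requiring the most care'' is in fact vacuous under the paper's reading of Lemma~\ref{boundlem}; recognizing that would collapse your argument to the standard convex-smooth GD proof, which is precisely what the paper records.
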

	
	\begin{proof}
		Consider the gradient descent step $x_{k+1} = z_k - t \triangledown f(z_k)$ in the algorithm. Since the iterates remain in a ball around a global minima asymptotically, we have from Lemma \ref{boundlem} for $k \geq M_{r}, \,$ $z_k = x_k$. Now let $y = x - t \triangledown f(x)$, we then get:
		\begin{align*}
			f(y) &\leq f(x) + \triangledown f(x)^T (y-x) + \frac{1}{2} \triangledown^2 f(x) \parallel y-x \parallel_2^2 \\
			&\leq f(x) + \triangledown f(x)^T (y-x) + \frac{1}{2} L \parallel y-x \parallel_2^2 \\
			&= f(x) + \triangledown f(x)^T (x - t \triangledown f(x) - x) + \frac{1}{2} L \parallel y -x \parallel_2^2 \\
			&= f(x) -  t \parallel \triangledown f(x) \parallel_2^2 + \frac{1}{2} L \parallel  y - x\parallel_2^2 \\
			&= f(x) - \big(1-\frac{1}{2} Lt\big)t \parallel \triangledown f(x) \parallel_2^2.
		\end{align*}
		Using the fact that $t \leq 1/L$, $-\big(1-\frac{1}{2} Lt\big) \leq -\frac{1}{2}$, hence we have
		\begin{equation} \label{fybound}
			f(y) \leq f(x) - \frac{1}{2}t\parallel \triangledown f(x) \parallel_2^2.
		\end{equation}
		Next we bound $f(y)$ the objective function value at the next iteration in terms of $f(x^*)$. 
		Note that by assumption $f$ is convex in the ball $B(x^*,r)$. 
		Thus we have for $x\in B(x^*,r)$,
		\begin{align*}
			f(x) \leq f(x^*) + \triangledown f(x)^T (x-x^*)
		\end{align*}
		Plugging this into equation \eqref{fybound} we get,
		\begin{align*}
			f(y) &\leq f(x^*) + \triangledown f(x)^T (x - x^*) - \frac{t}{2} \parallel \triangledown f(x) \parallel_2^2 \\
			f(y) - f(x^*) &\leq \frac{1}{2t}\bigg( 2t\triangledown f(x)^T (x - x^*) - t^2 \parallel \triangledown f(x) \parallel_2^2 \bigg) \\
			f(y) - f(x^*) &\leq \frac{1}{2t}\bigg( 2t\triangledown f(x)^T (x - x^*) - t^2 \parallel \triangledown f(x) \parallel_2^2 \\
			&\quad \quad \quad \quad \quad \quad - \parallel x - x^* \parallel_2^2 + \parallel x - x^* \parallel_2^2 \bigg) \\
			f(y) - f(x^*) &\leq \frac{1}{2t}\bigg( \parallel x - x^* \parallel_2^2 - \parallel x - t\triangledown f(x) -  x^* \parallel_2^2 \bigg)
		\end{align*}
		By definition we have $y = x - t \triangledown f(x)$, plugging this into the previous equation we have
		\begin{equation}
			f(y) - f(x^*) \leq \frac{1}{2t}\bigg( \parallel x - x^* \parallel_2^2 - \parallel y -  x^* \parallel_2^2 \bigg) 
		\end{equation}
		This holds for all gradient descent iterations $i \geq M$. Summing over all such iterations we get:
		\begin{align*}
			\sum_{i=M}^k \big(f(x_i) - f(x^*)\big) &\leq \sum_{i=M}^k \frac{1}{2t} \bigg(\parallel x_{i-1} - x^* \parallel_2^2 - \parallel x_i - x^* \parallel_2^2 \bigg) \\
			&= \frac{1}{2t} \bigg(\parallel x_{M} - x^* \parallel_2^2 - \parallel x_k - x^* \parallel_2^2 \bigg) \\
			&\leq \frac{1}{2t} \bigg(\parallel x_{M} - x^* \parallel_2^2 \bigg).
		\end{align*}
		Finally using the fact that $f$ is decreasing in every iteration, we conclude that
		\[ f(x_k) - f(x^*) \leq \frac{1}{k} \sum_{i=M}^k \big(f(x_i) - f(x^*)\big) \leq \frac{1}{2t(M-k)} \parallel x_{M} - x^* \parallel_2^2 . \]
	\end{proof}
	
	\begin{remark}
		If the global minima $x^*$ is not unique, then the algorithm can oscillate around different minima. If we assume that the function is convex in a small interval around all these global minima, then we can show that the algorithm converges to one of the minimum points $x^*$. In addition like in the previous theorem we can also show that the convergence is linear.
	\end{remark}
	
	\begin{remark}
		We have not considered momentum based acceleration methods which fasten the rate of convergence in this paper.
	\end{remark}
	
	\subsection{Real-World Applications}
	
	Global optimization arises in a wide variety of domains, including science, engineering, finance, and machine learning. The following examples illustrate how our result — that global optima are not computable in the oracle model — applies to important real-world problems.
	
	\subsubsection{Portfolio Optimization}
	
	A classical problem in finance is to minimize the risk (typically measured by the variance of the portfolio's rate of return) subject to achieving a specified expected return. In the simplest setting, this is a convex optimization problem and can be solved efficiently. However, when we introduce realistic extensions such as transaction costs, taxes, and market impact, the resulting objective becomes non-convex, often with many local minima. Our result implies that even approximate solutions to such extended problems are not computable in the general oracle setting.
	
	\subsubsection{Supervised Learning}
	
	In supervised learning, particularly in training neural networks (NNs), the loss function is often highly non-convex and rugged. For instance, in image classification tasks, such as recognizing handwritten digits, the loss surface has many local optima. Although backpropagation — a gradient-based method — is commonly used, our result implies that it cannot, in general, compute the global optimum or even an $\epsilon$-approximation to it.
	
	Consider a concrete example: classifying handwritten digits \cite{bishop}. We are given a training set $D_T = \{(x_i, C(x_i))\}$ of images $x_i$ labeled with digits $C(x_i) \in \{0, \ldots, 9\}$. The goal is to learn a model that generalizes to unseen images in a test set $D_E$. A typical approach is to fit a function $y(x) = f(x, W)$, where $W$ are the model parameters and $f$ is a non-linear activation function. While closed-form solutions are available under strong (and often unrealistic) assumptions, such models underperform on real-world tasks. More flexible models like NNs or Support Vector Machines perform better but rely on heuristics for optimization due to the non-convexity. Our result provides theoretical justification: no algorithm can guarantee convergence to the global optimum.
	
	\subsubsection{Chemical Process Optimization}
	
	Chemical process optimization is a vast field involving problems where complex and often non-analytic cost functions must be minimized \cite{chembook}. Consider a scenario where a manufacturer must distribute a chemical product to various customers from multiple plant locations. The problem involves determining how much to produce at each plant ($Y_i$) and how much to send to each customer ($Y_{ij}$), such that the total cost — including production, transportation, and capacity-related costs — is minimized.
	
	If even one component of the objective (e.g., cost or efficiency curve) is not analytically defined and must be evaluated through simulation or real-world measurements (i.e., via an oracle), our result implies that no algorithm can compute or approximate the global optimum of such problems.
	
	\vspace{1em}
	These examples highlight how global optimization, particularly in the oracle model, naturally arises in practice — and why its theoretical limitations must be taken seriously when designing algorithms for such problems.
		
	\section{Experimental Results}
	
	There exist many benchmark functions commonly used to evaluate global optimization algorithms. These include functions with multiple local minima, valley-shaped surfaces, and other challenging landscapes. One such function is the **Beale function**, defined on a rectangular domain $B = [(-4.5, -4.5), (4.5, 4.5)]$ as:
	
	\[
	f(x, y) = (1.5 - x + xy)^2 + (2.25 - x + xy^2)^2 + (2.625 - x + xy^3)^2.
	\]
	
	This function has a global minimum at $(3, 0.5)$ where $f(x^*) = 0$, and local minima elsewhere (e.g., at $(0,1)$). The gradient descent algorithm can get stuck in such local minima if not initialized near the global minimum.
	
	In our oracle model setting, the function $f(x)$ is not available analytically. Instead, it is accessible only through an oracle returning values for given inputs. According to our theoretical results, starting from an arbitrary point in the domain $B$, no algorithm (without additional information such as a bound on the basin of attraction) can compute or even approximate the global minimum. This holds even in the presence of higher-order oracles unless certain global properties (like bounds on derivatives) are known.
	
	\subsection*{Benchmark Tests}
	
	To empirically evaluate our algorithm, we implemented it on a suite of benchmark functions shown in Tables~\ref{benchf} and~\ref{benchfs}. For high-dimensional tests, the Rastrigin, Sphere, and Rosenbrock functions were evaluated in 20 dimensions. The algorithm's performance is measured by plotting the function value as a function of iterations. In all cases, we observe convergence to the known global optimum.
	
	\begin{table}[ht]
		\begin{small}
		\centering
		\caption{Benchmark Functions for Global Optimization}
		\label{benchf}
		\begin{tabular}{|c|c|}
			\hline
			\textbf{Name} & \textbf{Formula} \\
			\hline
			Rastrigin & $f(x) = An + \sum_{i=1}^n \left(x_i^2 - A \cos(2 \pi x_i)\right), \; A = 10$ \\
			\hline
			Ackley & $\begin{aligned}
				f(x, y) = &-20 \exp\left(-0.2 \sqrt{0.5(x^2 + y^2)}\right) \\
				& - \exp\left(0.5\left(\cos(2\pi x) + \cos(2\pi y)\right)\right) + e + 20
			\end{aligned}$ \\
			\hline
			Sphere & $f(x) = \sum_{i=1}^n x_i^2$ \\
			\hline
			Rosenbrock & $f(x) = \sum_{i=1}^{n-1} \left[100(x_{i+1} - x_i^2)^2 + (1 - x_i)^2\right]$ \\
			\hline
			Beale & $f(x, y) = (1.5 - x + xy)^2 + (2.25 - x + xy^2)^2 + (2.625 - x + xy^3)^2$ \\
			\hline
			Booth & $f(x, y) = (x + 2y - 7)^2 + (2x + y - 5)^2$ \\
			\hline
		\end{tabular}
			\end{small}
	\end{table}
	
	\begin{table}[ht]
		\centering
		\caption{Global Minima and Domains for Benchmark Functions}
		\label{benchfs}
		\begin{tabular}{|c|c|c|}
			\hline
			\textbf{Function} & \textbf{Global Minimum} & \textbf{Domain} \\
			\hline
			Rastrigin & $f(0,\ldots,0) = 0$ & $-5.12 \leq x_i \leq 5.12$ \\
			\hline
			Ackley & $f(0,0) = 0$ & $-5 \leq x, y \leq 5$ \\
			\hline
			Sphere & $f(0,\ldots,0) = 0$ & $\mathbb{R}^n$ \\
			\hline
			Rosenbrock & $f(1,\ldots,1) = 0$ & $\mathbb{R}^n$ \\
			\hline
			Beale & $f(3,0.5) = 0$ & $-4.5 \leq x, y \leq 4.5$ \\
			\hline
			Booth & $f(1,3) = 0$ & $-10 \leq x, y \leq 10$ \\
			\hline
		\end{tabular}
	\end{table}
	
	\begin{table}[ht]
		\centering
		\caption{Parameters Used in the Algorithm}
		\label{values}
		\begin{tabular}{|c|c|c|}
			\hline
			\textbf{Function} & \textbf{Step-size} & \textbf{Lower Bound on Basin Size} \\
			\hline
			Rastrigin & 0.0001 & 0.5 \\
			\hline
			Ackley & 0.0001 & 0.1 \\
			\hline
			Sphere & 0.001 & 0.3 \\
			\hline
			Rosenbrock & 0.001 & 0.5 \\
			\hline
			Beale & 0.0005 & 0.3 \\
			\hline
			Booth & 0.005 & 0.3 \\
			\hline
		\end{tabular}
	\end{table}
	
	These experiments demonstrate that, when the basin of attraction of the global optimum is known (or lower bounded), our algorithm effectively converges to the optimal value — in line with our theoretical guarantees.

	\begin{figure}[!ht]
		\centering
		\begin{subfigure}{.4\textwidth}
			\includegraphics[width=.6\textwidth,angle=270]{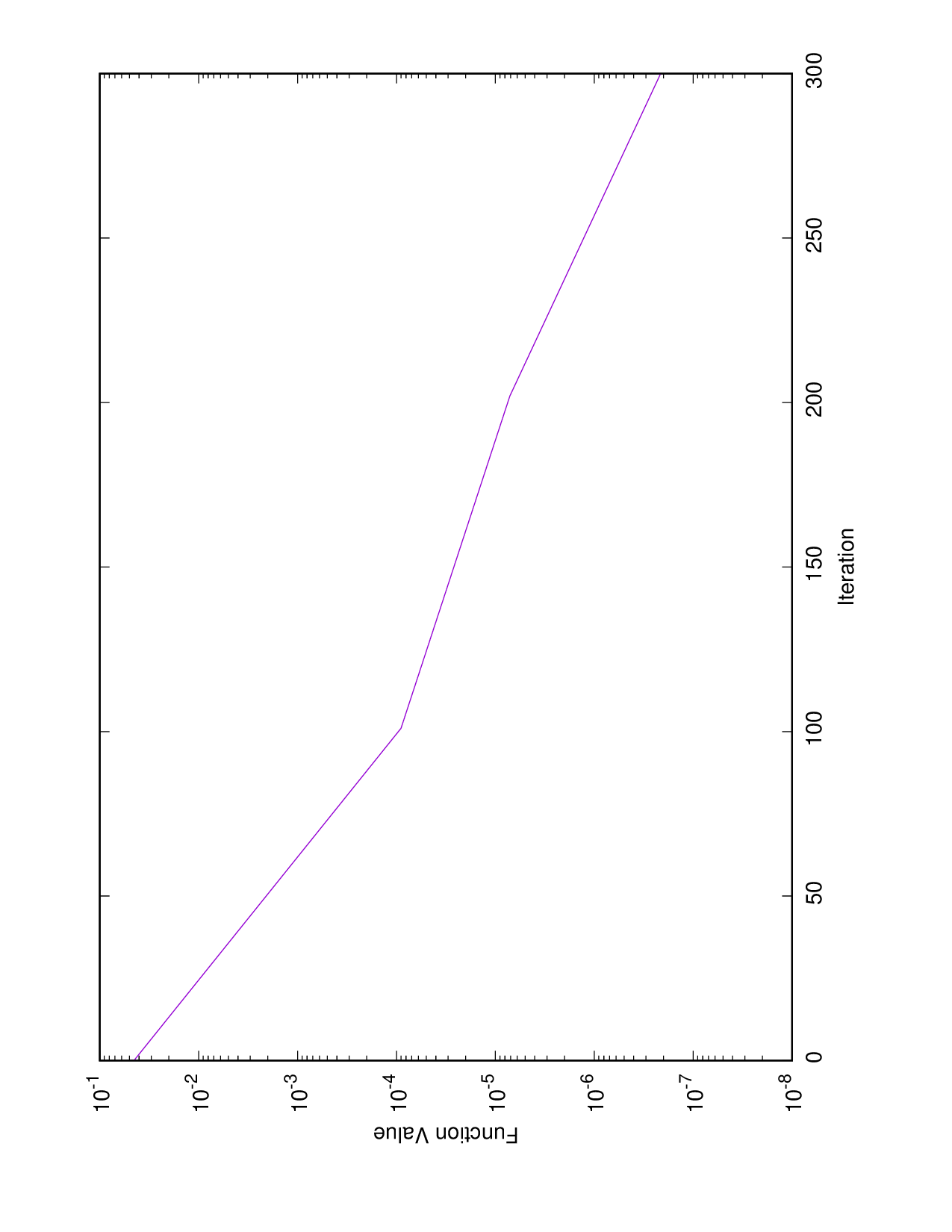}
		\end{subfigure}
		\begin{subfigure}{.4\textwidth}
			\centering
			\includegraphics[width=.7\textwidth,angle=270]{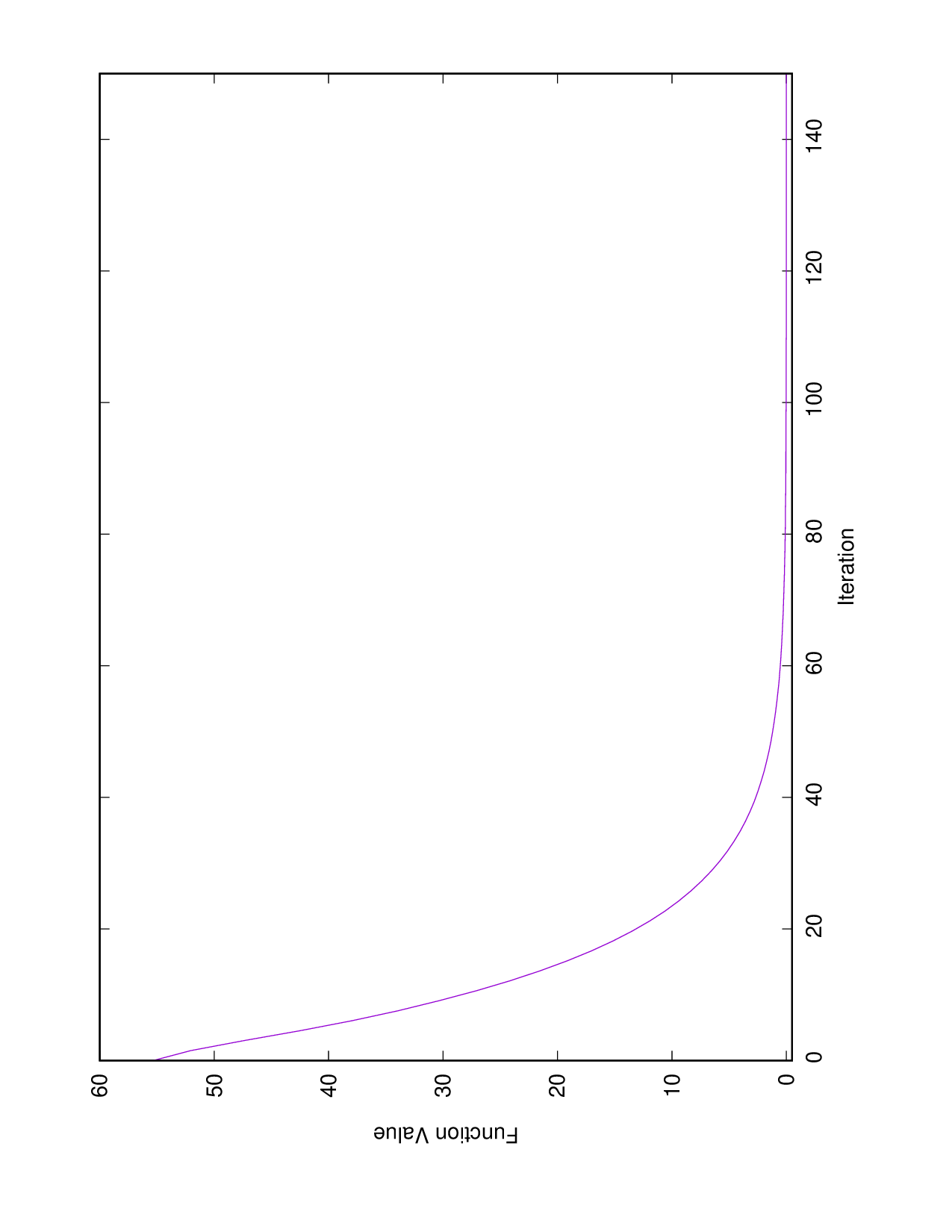}
		\end{subfigure}
		\caption{Convergence to Optimum for Ackley and Rastrigin Function}
	\end{figure}

	\begin{figure}[!ht]
		\centering
		\begin{subfigure}{.4\textwidth}
			\includegraphics[width=.7\textwidth,angle=270]{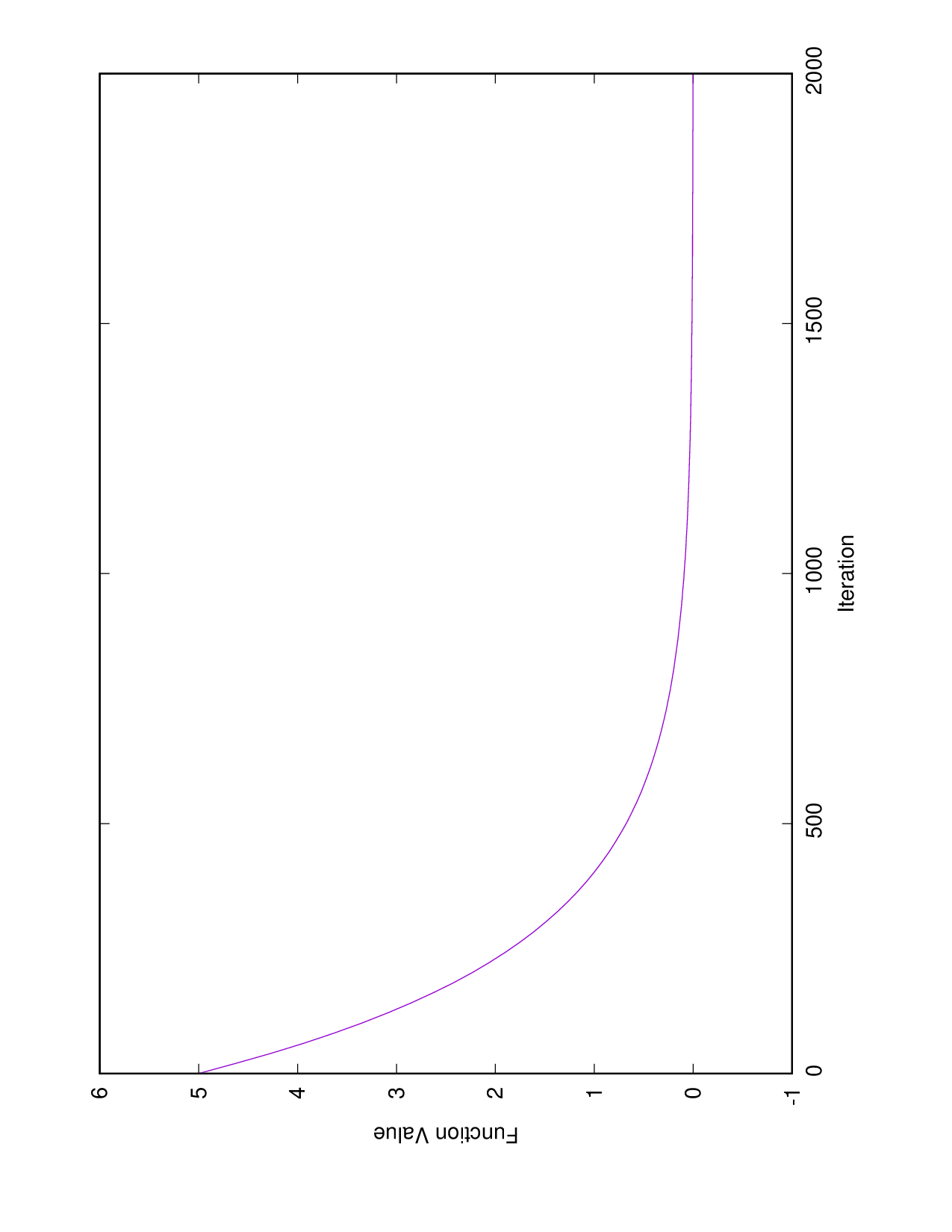}
		\end{subfigure}
		\begin{subfigure}{.4\textwidth}
			\centering
			\includegraphics[width=.6\textwidth,angle=270]{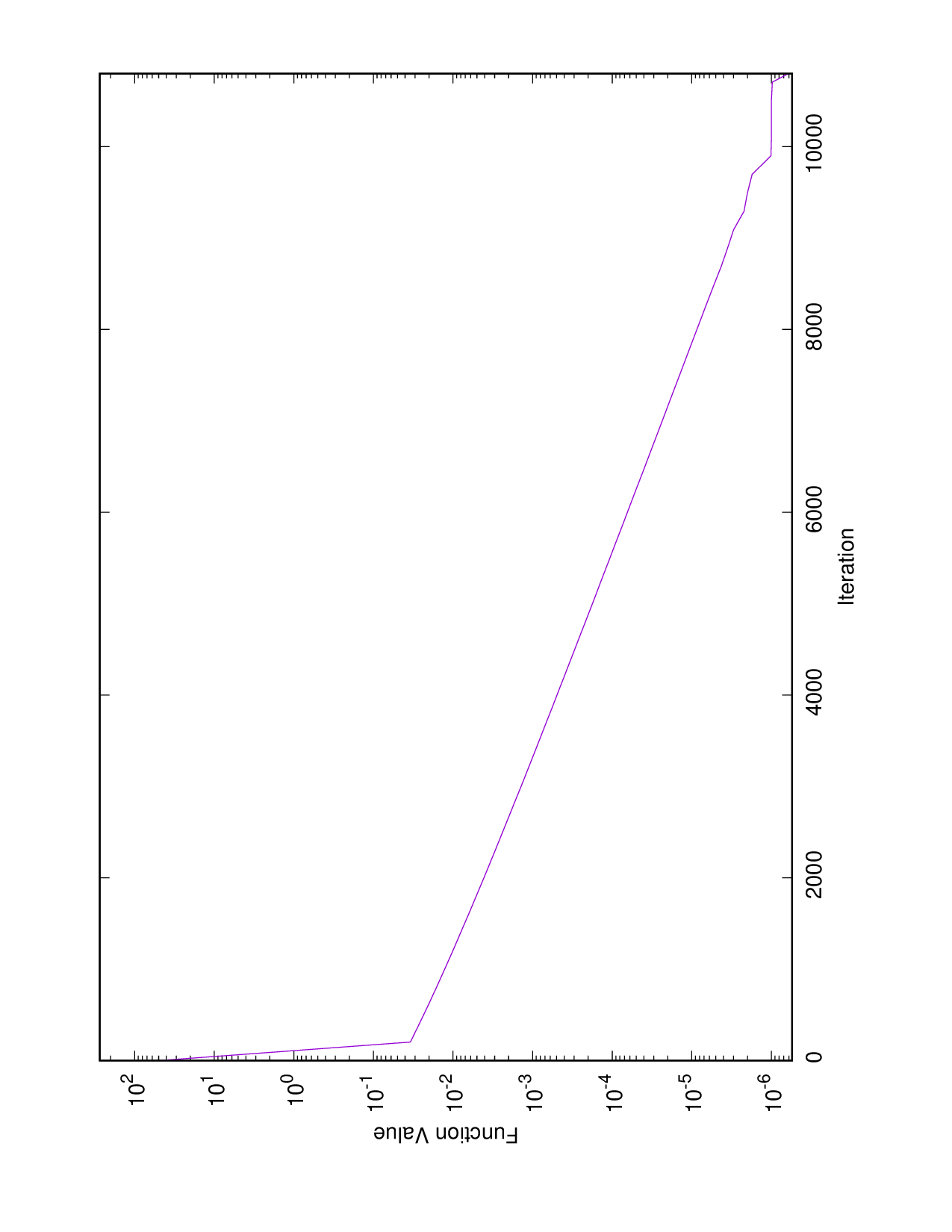}
		\end{subfigure}
		\caption{Convergence to Optimum for Sphere and Rosenbrock Function}
	\end{figure}

	\begin{figure}[!ht]
		\centering
		\begin{subfigure}{.4\textwidth}
			\includegraphics[width=.6\textwidth,angle=270]{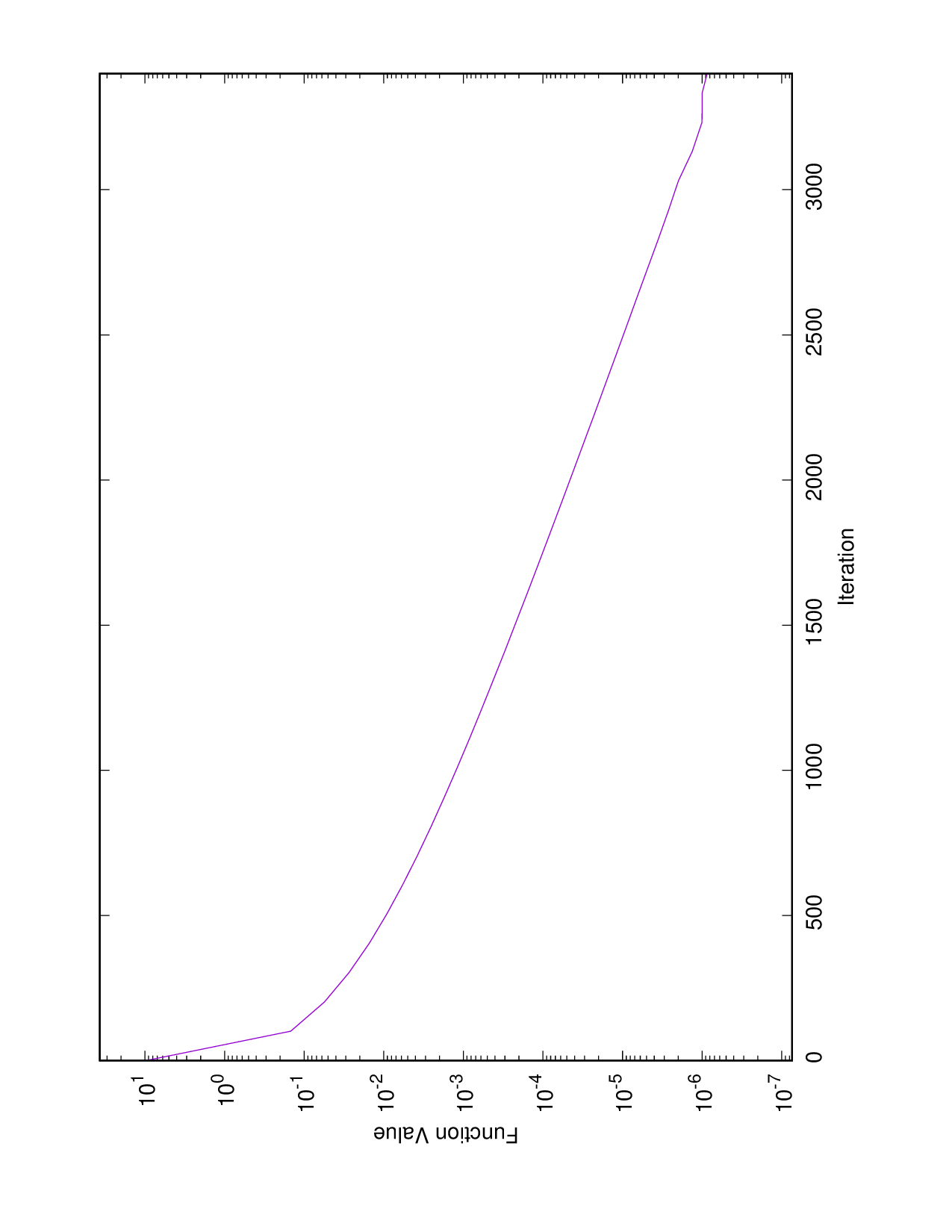}
		\end{subfigure}
		\begin{subfigure}{.4\textwidth}
			\centering
			\includegraphics[width=.6\textwidth,angle=270]{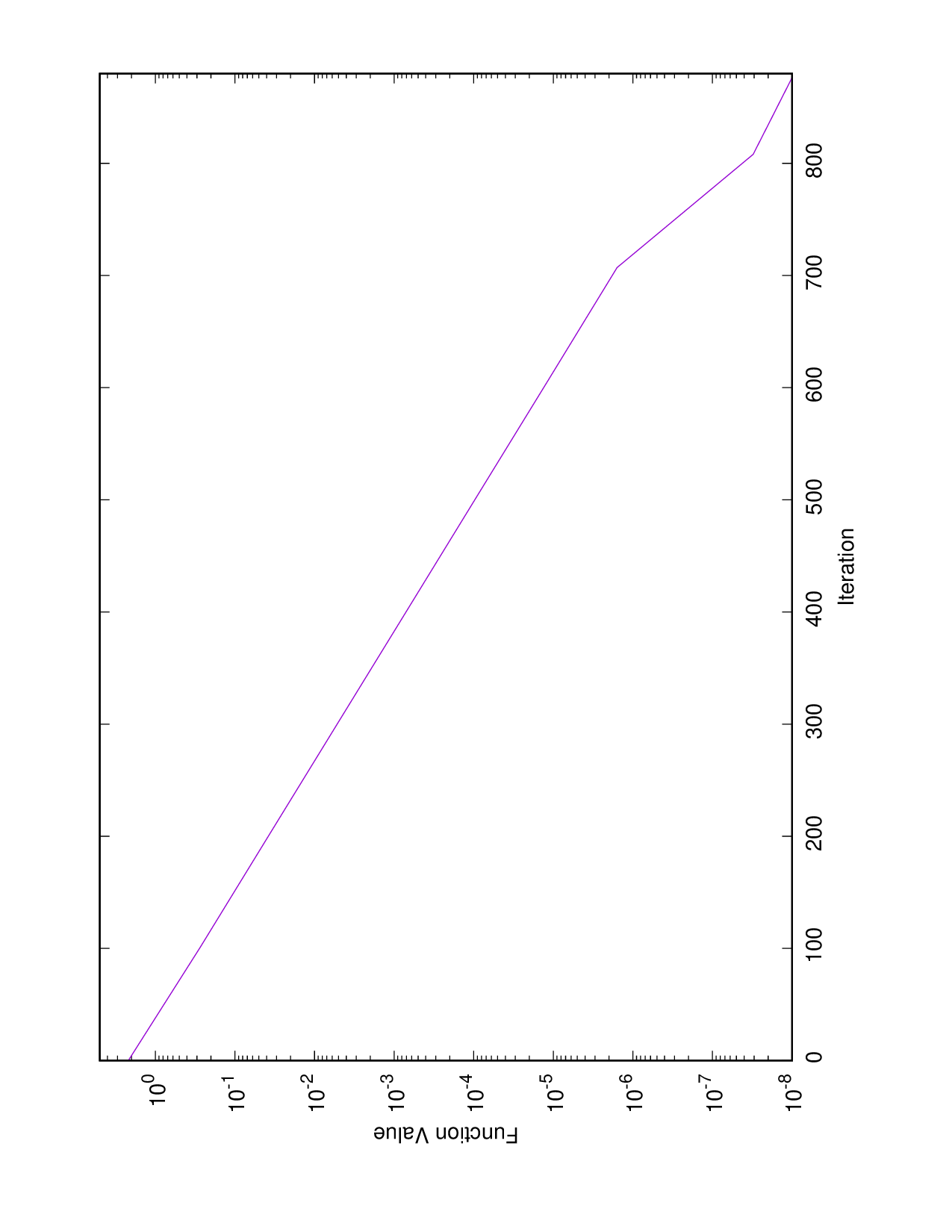}
		\end{subfigure}
		\caption{Convergence to Optimum for Beale and Booth Function}
	\end{figure}
	
	\section{Conclusion}
	We have demonstrated that in the oracle model, the problem of computing even approximate global optima of nonconvex functions is undecidable. This sharpens the classical understanding of optimization complexity and provides a rigorous explanation for the limitations of many real-world optimization strategies. Our characterization of computability also opens up a pathway for future work in defining structural conditions under which optimization becomes tractable or computable.

	
	To understand when global optimization becomes feasible, we introduced a necessary and sufficient condition based on a computable predicate property. We illustrated this through the concept of a known basin of attraction and provided an algorithm that successfully approximates the global minimum under such a condition. 
	
	Finally, we supported our theoretical findings with numerical experiments on standard benchmark functions, reinforcing the practical relevance of our results.

	\bibliographystyle{plain}
	\bibliography{glosuba}
	
	
	
	
	
	
	
\end{document}